\newcommand{\al}{\alpha}
\newcommand{\be}{\beta}
\newcommand{\la}{\lambda}
\newcommand{\eps}{\varepsilon}
\newcommand{\iy}{\infty}
\theoremstyle{plain}
\newtheorem{thm}{Theorem}
\newtheorem{lem}{Lemma}
\newtheorem{cor}{Corollary}
\theoremstyle{definition}
\theoremstyle{remark}
\newtheorem{remark}{Remark}
\begin{document}

\begin{center}
{\large\bf A 2-edge partial inverse problem for the Sturm-Liouville operators with singular potentials
on a star-shaped graph 
}
\\[0.2cm]
{\bf Natalia P. Bondarenko} \\[0.2cm]
\end{center}

\vspace{0.5cm}

{\bf Abstract.} Boundary value problems for Sturm-Liouville operators with potentials from the class $W_2^{-1}$ on a star-shaped graph are considered. We assume that the potentials are known on all the edges of the graph except two, and show that the potentials on the remaining edges can be constructed by fractional parts of two spectra. A uniqueness theorem is proved, and an algorithm for the constructive solution of the partial inverse problem is provided. The main ingredient of the proofs is the Riesz-basis property of specially constructed systems of functions.

\medskip

{\bf Keywords:} partial inverse problem, quantum graph, Sturm-Liouville operator, singular potential, Weyl function, Riesz basis.

\medskip

{\bf AMS Mathematics Subject Classification (2010):} 34A55 34B09 34B24 34B45 47E05 

\vspace{1cm}

{\large \bf 1. Introduction}

\bigskip

The paper concerns the theory of inverse spectral problems for differential operators on geometrical graphs. Differential operators on graphs (or so-called quantum graphs) have been actively studied by mathematicians in recent years and have applications in different branches of science and engineering (see \cite{Kuch02, PPP04} and the bibliography therein). Inverse spectral problems consist in recovering differential operators from their spectral characteristics. Nowadays inverse problems for quantum graphs attract much attention of mathematicians. The reader can find an extensive bibliography on this subject in the survey \cite{Yur16}.

In this paper, we consider a star-shaped graph $G$ with edges $e_j$, $j = \overline{1, m}$, of equal length $\pi$. 
For each edge $e_j$, introduce a parameter $x_j \in [0, \pi]$. The value $x_j = 0$ corresponds to the boundary vertex, associated with $e_j$, and $x_j = \pi$ corresponds to the internal vertex.

Let $y = [y_j(x_j)]_{j = 1}^m$ be a vector function on the graph $G$, and let $q_j$, $j = \overline{1, m}$, be real-valued functions from $W_2^{-1}(0, \pi)$, i.e. $q_j = \sigma_j'$, $\sigma_j \in L_2(0, \pi)$, where the derivative is considered in the sense of distributions. The functions $\sigma_j$ are called the {\it potentials}. The Sturm-Liouville operator
$$
\ell_j y_j := -y_j'' + q_j(x_j) y_j  
$$
on the edge $e_j$ can be understood in the following sense:
$$
\ell_j y_j = -(y_j^{[1]})' - \sigma_j(x_j) y_j^{[1]} - \sigma_j^2(x_j) y_j,
$$ 
where $y_j^{[1]} = y_j' - \sigma_j y_j$ is a {\it quasi-derivative}, and
$$
\mbox{Dom}(\ell_j) = \{ y_j \in W_2^1[0, \pi] \colon y_j^{[1]} \in W_1^1[0, \pi], \: \ell_j y_j \in L_2(0, \pi) \}. 
$$

Properties of Sturm-Liouville operators with singular potentials in the described form were established in \cite{SS99}. Inverse spectral problems on a {\it finite interval}, consisting in recovering singular potentials from different types of spectral characteristics, were extensively studied by R.O.~Hryniv and Ya.V.~Mykytyuk \cite{HM03, HM04-2spectra, HM04-half, HM04-transform}. 
However, as far as we know, there is the only paper \cite{FIY08}, concerning 
an inverse problem for Sturm-Liouville operators with the potentials $q_j$ from $W_2^{-1}$ on graphs. 

In the present paper, we study the system of the Sturm-Liouville equations on the graph~$G$:
\begin{equation} \label{eqv}
    (\ell_j y_j)(x_j) = \la y_j (x_j), \quad x_j \in (0, \pi), \: y_j \in \mbox{Dom}(\ell_j), \: j = \overline{1, m}.
\end{equation}
Let $L$ and $L_0$ be the boundary value problems for the system \eqref{eqv} with 
the standard matching conditions in the internal vertex
\begin{equation*} 
  	y_1(\pi) = y_j(\pi), \quad j = \overline{2, m}, 
    \quad \sum_{j = 1}^m y_j^{[1]}(\pi) = 0,
\end{equation*}
and the mixed boundary conditions
\begin{align*} 
L \colon \quad & y_j^{[1]}(0) = 0, \: j = \overline{1, p}, \quad y_j(0) = 0, \: j = \overline{p+1, m},\\
L_0 \colon \quad & y_j^{[1]}(0) = 0, \: j = \overline{1, p+1}, \quad y_j(0) = 0, \: j = \overline{p+2, m},
\end{align*}
where $2 \le p \le m -2$.

The asymptotic behavior of the spectrum of the problem $L$ is described by the following theorem, which can also be applied to the problem $L_0$. Everywhere below the same symbol $\{ \varkappa_n \}$ is used for different sequences from $l_2$.

\begin{thm} \label{thm:asympt}
The boundary value problem $L$ has a countable set of eigenvalues, which are real and can be numbered 
as $\{ \la_{nk} \}_{n \in \mathbb N, \, k = \overline{1, m} }$ (counting 
with their multiplicities) to satisfy the following asymptotic formulas
\begin{equation} \label{asymptrho}
\arraycolsep=1.4pt\def\arraystretch{2.2}
\left.
\begin{array}{ll}
    \rho_{n1} & =  n - 1 + \dfrac{\al}{\pi} + \varkappa_n, \\ 
    \rho_{n2} & = n - \dfrac{\al}{\pi} + \varkappa_n, \\
    \rho_{nk} & = n -\dfrac{1}{2} + \varkappa_n,  \quad k \in \mathcal I_3, \\ 
    \rho_{nk} & = n + \varkappa_n, \quad k \in \mathcal I_4,
\end{array}
\right\}
\end{equation}
where $\rho_{nk} = \sqrt{\la_{nk}}$, $\al = \arccos \sqrt{\frac{p}{m}}$, $\mathcal I_3$ and $\mathcal I_4$ are some fixed sets of indices, such that 
$\mathcal I_3 \cup \mathcal I_4 = \overline{3, m}$, $\mathcal I_3 \cap \mathcal I_4 = \varnothing$, $|\mathcal I_3| = p-1$, $|\mathcal I_4| = m-p-1$.
For definiteness, we assume that $3 \in \mathcal I_3$ and $4 \in \mathcal I_4$.
\end{thm}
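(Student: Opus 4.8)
The plan is to reduce the boundary value problem to locating the zeros of a scalar characteristic function and then to analyze the asymptotics of those zeros. First I would fix, on each edge $e_j$, the two solutions $C_j(x_j,\rho)$ and $S_j(x_j,\rho)$ of $\ell_j y_j = \rho^2 y_j$ determined by $C_j(0,\rho)=1$, $C_j^{[1]}(0,\rho)=0$ and $S_j(0,\rho)=0$, $S_j^{[1]}(0,\rho)=1$. By the theory of equations with $W_2^{-1}$ potentials from \cite{SS99}, these functions are entire in $\la=\rho^2$ and satisfy, for $|\rho|\to\iy$, the asymptotics
\begin{align*}
C_j(\pi,\rho) &= \cos\rho\pi + \frac{\varkappa_j(\rho)}{\rho}, & C_j^{[1]}(\pi,\rho) &= -\rho\sin\rho\pi + \varkappa_j(\rho), \\
S_j(\pi,\rho) &= \frac{\sin\rho\pi}{\rho} + \frac{\varkappa_j(\rho)}{\rho^2}, & S_j^{[1]}(\pi,\rho) &= \cos\rho\pi + \frac{\varkappa_j(\rho)}{\rho},
\end{align*}
with remainders $\varkappa_j$ that are square-summable along the relevant sequences of $\rho$. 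Writing an eigenfunction as $y_j = A_j C_j$ for $j = \overline{1,p}$ (to satisfy $y_j^{[1]}(0)=0$) and $y_j = A_j S_j$ for $j = \overline{p+1,m}$ (to satisfy $y_j(0)=0$), I would substitute these into the $m-1$ continuity conditions $y_1(\pi)=y_j(\pi)$ and the Kirchhoff condition $\sum_j y_j^{[1]}(\pi)=0$. The determinant of the resulting homogeneous system in $A_1,\dots,A_m$ is the characteristic function
$$\Delta(\rho) = \sum_{j=1}^m u_j^{[1]}(\pi,\rho)\prod_{i\ne j} u_i(\pi,\rho), \qquad u_j := \begin{cases} C_j, & j\le p, \\ S_j, & j>p, \end{cases}$$
which is entire in $\la$ and whose zeros, counted with multiplicity, are precisely the eigenvalues $\la_{nk}$.

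Next I would substitute the solution asymptotics into $\Delta$ and collect the leading term. A direct computation, using $-p\sin^2\rho\pi + (m-p)\cos^2\rho\pi = m\cos^2\rho\pi - p$, yields $\Delta = \Delta_0 + (\text{lower order})$ with
$$\Delta_0(\rho) = \frac{\cos^{p-1}\rho\pi\,\sin^{m-p-1}\rho\pi}{\rho^{\,m-p-1}}\bigl(m\cos^2\rho\pi - p\bigr).$$
The zeros of $\Delta_0$ split into the $(p-1)$-fold zeros of $\cos\rho\pi$ at $\rho=n-\tfrac12$, the $(m-p-1)$-fold zeros of $\sin\rho\pi$ at $\rho=n$, and the zeros of $m\cos^2\rho\pi-p$; since $0<p/m<1$ for $2\le p\le m-2$, setting $\al=\arccos\sqrt{p/m}$ the latter occur at $\rho=n-1+\al/\pi$ and $\rho=n-\al/\pi$. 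These are exactly the four groups of main terms in \eqref{asymptrho}, and the multiplicity bookkeeping $(p-1)+(m-p-1)+2 = m$ accounts for all $m$ indices $k$, with $|\mathcal I_3|=p-1$ and $|\mathcal I_4|=m-p-1$.

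To transfer this to $\Delta$ I would work in the $\rho$-plane with $\rho^{m-p-1}\Delta(\rho)$ (clearing the pole of $\Delta_0$) and apply Rouch\'e's theorem on contours enclosing the zeros of the trigonometric main part --- small fixed circles about each main-term point together with a standard argument on vertical strips. For large $n$ one has $|\Delta-\Delta_0|<|\Delta_0|$ on these contours, so $\Delta$ has the same number of zeros inside, which fixes the counting and crude localization. Sharpening to $l_2$ accuracy is done by expanding $\Delta$ about each main-term point: the contributions of the remainders $\varkappa_j$ evaluated along $\{n-\tfrac12\}$, $\{n\}$, $\{n-1+\al/\pi\}$, $\{n-\al/\pi\}$ are of Fourier-coefficient type, hence lie in $l_2$, so the deviations of $\rho_{nk}$ from their main terms form $l_2$ sequences, giving \eqref{asymptrho}. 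Reality of the $\la_{nk}$ follows from self-adjointness of $L$ (real potentials, self-adjoint matching and boundary conditions) and its semiboundedness, which also justifies taking $\rho_{nk}=\sqrt{\la_{nk}}$ real for large $n$.

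The step I expect to be the main obstacle is the perturbation analysis at the multiple zeros. The factors $\cos^{p-1}\rho\pi$ and $\sin^{m-p-1}\rho\pi$ create clusters of $p-1$ and $m-p-1$ nearly coincident eigenvalues that Rouch\'e only localizes as a group, and the single factor $m\cos^2\rho\pi-p$ must be split into the two genuinely different families $n-1+\al/\pi$ and $n-\al/\pi$. Carrying out a term-by-term expansion that separates the clustered eigenvalues and, at the same time, keeps uniform $l_2$ control of the remainders contributed by all $m$ edges --- while verifying that the total multiplicity is exactly $m$ and that each family carries the asserted index set --- is the delicate part of the argument.
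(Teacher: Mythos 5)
The paper itself contains no proof of Theorem~\ref{thm:asympt}; it is deferred to \cite[Theorem~1]{Bond17-mixed}. Your overall scheme --- characteristic function \eqref{defDelta}, substitution of the edge asymptotics, identification of the main part $\Delta_0(\rho)=\rho^{-(m-p-1)}\cos^{p-1}\rho\pi\,\sin^{m-p-1}\rho\pi\,(m\cos^2\rho\pi-p)$, Rouch\'e localization, and self-adjointness for reality --- is the standard one and the computation of $\Delta_0$ and of the multiplicity count $(p-1)+(m-p-1)+2=m$ is correct. Two remarks on the set-up: for potentials in $W_2^{-1}$ the correct asymptotics is $C_j^{[1]}(\pi,\la)=-\rho\sin\rho\pi+\rho\,\varkappa_j(\rho)+O(1)$ (compare \eqref{intCS}), not $-\rho\sin\rho\pi+\varkappa_j(\rho)$; the relative error of every entry is of order $\varkappa$, which is exactly why the eigenvalue remainders are only $l_2$ and not $l_2/n$.

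The genuine gap is the step you yourself flag and then do not close: the $l_2$ sharpening at the clustered zeros. Your argument ``the contributions of the remainders are of Fourier-coefficient type, hence lie in $l_2$, so the deviations of $\rho_{nk}$ from their main terms form $l_2$ sequences'' is valid only at simple zeros of $\Delta_0$, i.e.\ for $k=1,2$. At $\rho=n-\tfrac12$ the main part vanishes to order $p-1$, so a perturbation of size $\varkappa_n$ a priori displaces the individual zeros by $O(\varkappa_n^{1/(p-1)})$, which lies in $l_{2(p-1)}$ but in general not in $l_2$; the same applies to the $(m-p-1)$-fold cluster at $\rho=n$. Since $p\ge 2$ and $m-p\ge 2$, at least one nontrivial cluster is always present, so the theorem is not proved as written. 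The standard repairs are: (i) observe that near $\rho=n-\tfrac12$ the remainder $\Delta-\Delta_0$ itself vanishes to order $p-2$, because every term of \eqref{defDelta} carries at least $p-1$ factors $C_i(\pi,\la)=\cos\rho\pi+\varkappa_i(\rho)$, and carry out the expansion factor by factor; or, more cleanly, (ii) use interlacing of the eigenvalues of $L$ with those of the decoupled problems $L_j$ (Dirichlet condition imposed at the internal vertex), whose individual spectra obey $\sqrt{\nu_n^{(j)}}=n-\tfrac12+\varkappa_n$ ($j\le p$) and $n+\varkappa_n$ ($j>p$) by the one-dimensional theory of \cite{SS99, HM04-2spectra}; this sandwiches each clustered eigenvalue within an $l_2$ distance of its main term without extracting roots. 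Either of these must be supplied before the statement \eqref{asymptrho} for $k\in\mathcal I_3\cup\mathcal I_4$ can be asserted.
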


Theorem~\ref{thm:asympt} can be proved similarly to \cite[Theorem~1]{Bond17-mixed}. 

In the papers \cite{Bond17-mixed, Bond17}, we started to investigate the so-called {\it partial inverse problems} on graphs. Our research was motivated by the paper \cite{Yang10} by C.-F. Yang, who has shown, that the (regular) potential of the Sturm-Liouville operator on one edge of the star-shaped graph is uniquely specified by a fractional part of the spectrum, if the potentials on all the other edges are given. 
In the papers \cite{Bond17-mixed, Bond17}, we have developed a constructive method for the solution of such 1-edge partial inverse problems. The method is based on the Riesz-basis property of some systems of vector functions, and allows one to establish the local solvability of the partial inverse problems and the stability for their solutions. Note that the partial inverse problems on graphs generalize the Hochstadt-Lieberman problem on a finite interval \cite{HM04-half, HL78}.

In this paper, we demonstrate that the approach of \cite{Bond17-mixed, Bond17} can be applied to operators with singular potentials. Moreover, in contrast to the previous papers, we study a 2-edge inverse problem, when the potentials on two edges are unknown. In this case, one spectrum is not sufficient for recovering the both potentials, so we use a part of the spectrum of the boundary value problem $L$ and a part of the spectrum of $L_0$. We prove the uniqueness theorem and provide a constructive algorithm for the solution of the 2-edge inverse problem. The most challenging part of the research is the analysis of the Riesz-basicity for special systems of functions (see Appendix A).

Let us proceed to the problem formulation. Denote by $C_j(x_j, \la)$, $j = \overline{1,p + 1}$,  and $S_j(x_j, \la)$, $j = \overline{p+1, m}$ the solutions of equations \eqref{eqv}
under the initial conditions
\begin{equation} \label{init}
  	C_j(0, \la) = 1, \: C_j^{[1]}(0, \la) = 0, \quad S_j(0, \la) = 0, \: S_j^{[1]}(0, \la) = 1.
\end{equation}

Consider a sequence $\{ \la_{nk} \}_{n \in \mathbb N\, k = \overline{1, 4}}$ of eigenvalues of the problem $L$, satisfying \eqref{asymptrho}, and a sequence $\{ \mu_{nk} \}_{n \in \mathbb N, \, k = 1, 2}$ of eigenvalues of the problem $L_0$, satisfying the following asymptotic relations
\begin{equation} \label{asymptmu}
\arraycolsep=1.4pt\def\arraystretch{2.2}
\left.
\begin{array}{ll}
    \sqrt{\mu_{n1}} & =  n - 1 + \dfrac{\al_1}{\pi} + \varkappa_n, \\ 
    \sqrt{\mu_{n2}} & = n - \dfrac{\al_1}{\pi} + \varkappa_n, \\
\end{array}
\right\}
\end{equation}
where $\al_1 = \arccos \sqrt{\frac{p+1}{m}}$. Further we suppose, that the following {\bf assumptions} hold.

\medskip

($A_1$) $C_j(\pi, \la_{nk}) \ne 0$, $j = \overline{1, p}$, and $S_j(\pi, \la_{nk}) \ne 0$, $j = \overline{p+1, m}$, for all $n \in \mathbb N$, $k = \overline{1, 4}$.

\smallskip

($A_2$) $C_j(\pi, \mu_{nk}) \ne 0$, $j = \overline{1, p + 1}$, and $S_j(\pi, \mu_{nk}) \ne 0$, $j = \overline{p+1, m}$, for all $n \in \mathbb N$, $k = \overline{1, 2}$.

\medskip

The paper is devoted to the following 2-edge partial inverse problem.

\medskip

{\bf IP.} {\it Given the potentials $\{ \sigma_j \}_{j = \overline{1, m} \backslash \{ 1, p+1 \}}$ and the eigenvalues $\{ \la_{nk} \}_{n \in \mathbb N, \, k = \overline{1, 4}}$, $\{ \mu_{nk} \}_{n \in \mathbb N, \, k = 1, 2}$, find the potentials $\sigma_1$ and $\sigma_{p+1}$.}

\medskip

The paper is organized as follows. {\it Section~2} contains some preliminaries. In {\it Section~3}, we prove the uniqueness theorem for IP. In {\it Section~4}, the constructive procedure for the solution of IP is developed. {\it Appendix A} is devoted to the main technical part of the paper, where we investigate the Riesz-basis property for special systems of functions. In {\it Appendix B}, we provide auxiliary results, concerning entire functions, constructed by their zeros.

Throughout the paper, we use the following notation.
\begin{itemize}
\item $\rho = \sqrt \la$, $\mbox{Re}\, \rho \ge 0$.
\item $B_{2, a}$ is the Paley-Wiener class of entire function of exponential type not greater than $a$, belonging to $L_2(\mathbb R)$.
\item $\mathbb N_0 = \mathbb N \cup \{ 0 \}$.
\item The symbol $C$ stands for different constants, independent on $x$, $\la$, etc.
\end{itemize}

\bigskip

{\large \bf 2. Preliminaries}

\bigskip

The eigenvalues of the problem $L$ coincide with the zeros of {\it the characteristic function}
\begin{equation} \label{defDelta}
	\Delta(\la) = \sum_{j = 1}^p C^{[1]}_j(\pi, \la) \prod_{\substack{i = 1 \\ i \ne j} }^p C_i(\pi, \la) \prod_{k = p+1}^m S_k(\pi, \la) +
	\sum_{j = p+1}^m S_j^{[1]}(\pi, \la) \prod_{i = 1}^p C_i(\pi, \la) \prod_{\substack{k = p+1 \\ k \ne i}}^m S_k(\pi, \la).   	
\end{equation}

Let $L_j$ be the boundary value problem for the Sturm-Liouville equation \eqref{eqv} for each fixed $j = \overline{1, m}$ with the boundary conditions $y_j^{[1]}(0) = 0$, $y_j(\pi) = 0$ for $j = \overline{1, p}$, and
$y_j(0) = y_j(\pi) = 0$ for $j = \overline{p+1, m}$. Denote by $M_j(\la)$ the {\it Weyl functions} of the problems $L_j$:
\begin{equation} \label{defM}
  	M_j(\la) := -\frac{C_j^{[1]}(\pi, \la)}{C_j(\pi, \la)}, \: j = \overline{1, p}, \quad 
  	M_j(\la) := -\frac{S_j^{[1]}(\pi, \la)}{S_j(\pi, \la)}, \: j = \overline{p+1, m}.
\end{equation}
Weyl functions and their generalizations are natural spectral characteristics for different classes of differential operators (see \cite{FIY08, Mar77, FY01}). For each fixed $j  = \overline{1, m}$, the potential $\sigma_j$ can be uniquely recovered from its Weyl function $M_j(\la)$ (see \cite{FIY08}).

Using \eqref{defDelta} and \eqref{defM}, one can easily derive the relation
\begin{equation} \label{sumM}
    \sum_{j = 1}^m M_j(\la) = -\frac{\Delta(\la)}{\prod\limits_{j = 1}^p C_j(\pi, \la) \prod\limits_{j = p+1}^m S_j(\pi, \la)}.
\end{equation}
Taking the assumption ($A_1$) into account, we obtain from \eqref{sumM}:
\begin{equation} \label{defg}
	M_1(\la_{nk}) + M_{p+1}(\la_{nk}) = - \sum_{\substack{j = 2 \\ j \ne p+1}}^m M_j(\la_{nk}) =: g_{nk}, \quad n \in \mathbb N, \: k = \overline{1, 4}.
\end{equation}
It follows from \eqref{defM}, that
\begin{equation} \label{sumMfrac}
M_1(\la) + M_{p+1}(\la) = \frac{D_1(\la)}{D_2(\la)},
\end{equation}
where 
\begin{equation} \label{defD}
D_1(\la) = - (C_1^{[1]}(\pi, \la) S_{p+1}(\pi, \la) + C_1(\pi, \la) S^{[1]}_{p+1}(\pi, \la)),
\quad D_2(\la) =  C_1(\pi, \la) S_{p+1}(\pi, \la).
\end{equation}

\begin{lem} \label{lem:asymptD}
The following relations hold
\begin{equation} \label{asymptD}
D_1(\pi, \la) = -\left( \cos 2 \rho \pi + \int_0^{2 \pi} N(t) \cos \rho t \, dt \right), \quad
D_2(\pi, \la) = \frac{\sin 2 \rho \pi}{2 \rho} + \frac{1}{\rho}\int_0^{2 \pi} K(t) \sin \rho t \, dt,
\end{equation}
where $N$ and $K$ are real-valued functions from $L_2(0, 2 \pi)$.
\end{lem}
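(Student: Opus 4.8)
The plan is to read off \eqref{asymptD} from Paley--Wiener representations of the four quantities entering \eqref{defD}. For a real-valued potential $\sigma\in L_2(0,\pi)$, the transformation-operator theory for Sturm--Liouville operators with $W_2^{-1}$-potentials (see \cite{SS99, HM04-transform, FIY08}) yields, as functions of $\rho$, representations of the form
\begin{align*}
C_1(\pi,\la) &= \cos\rho\pi + \int_0^\pi P_1(t)\cos\rho t\,dt, &
S_{p+1}^{[1]}(\pi,\la) &= \cos\rho\pi + \int_0^\pi P_2(t)\cos\rho t\,dt, \\
C_1^{[1]}(\pi,\la) &= -\rho\sin\rho\pi + \int_0^\pi P_4(t)\cos\rho t\,dt, &
S_{p+1}(\pi,\la) &= \frac{\sin\rho\pi}{\rho} + \frac{1}{\rho}\int_0^\pi P_3(t)\sin\rho t\,dt,
\end{align*}
with real-valued kernels $P_i\in L_2(0,\pi)$, where each term is entire of exponential type $\pi$ and each remainder, restricted to the real axis, belongs to $L_2(\mathbb R)$, i.e. lies in $B_{2,\pi}$. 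Hence by \eqref{defD} both $D_1(\la)$ and $D_2(\la)$ are entire of exponential type at most $2\pi$. Multiplying out the leading terms and using $\cos^2\rho\pi-\sin^2\rho\pi=\cos2\rho\pi$ and $2\sin\rho\pi\cos\rho\pi=\sin2\rho\pi$ produces the principal parts $-\cos2\rho\pi$ for $D_1(\la)$ and $\dfrac{\sin2\rho\pi}{2\rho}$ for $D_2(\la)$, in agreement with \eqref{asymptD}.

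It then remains to show that $D_1(\la)+\cos2\rho\pi$ and $2\rho D_2(\la)-\sin2\rho\pi$ lie in the Paley--Wiener class $B_{2,2\pi}$ and to fix the parity of their transforms. First I would expand the products in \eqref{defD} into leading$\times$leading terms (already accounted for), leading$\times$remainder terms, and remainder$\times$remainder terms. Every leading factor ($\cos\rho\pi$, $\sin\rho\pi$, or $\rho^{-1}\sin\rho\pi$) is bounded on $\mathbb R$, and every remainder is both square-integrable and bounded on $\mathbb R$ (as an element of $B_{2,\pi}$); therefore each cross term and each product of two remainders is square-integrable on $\mathbb R$, while the products of two factors of type $\pi$ have type $2\pi$. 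Thus the full remainders belong to $B_{2,2\pi}$. Since $D_1(\la)$ and $D_2(\la)$ are functions of $\la=\rho^2$, they are even in $\rho$; consequently $D_1(\la)+\cos2\rho\pi$ is an even element of $B_{2,2\pi}$ and, by the Paley--Wiener theorem, equals a cosine transform $\int_0^{2\pi}N(t)\cos\rho t\,dt$ with $N\in L_2(0,2\pi)$, whereas $2\rho D_2(\la)-\sin2\rho\pi$ is an odd element of $B_{2,2\pi}$ and equals $2\int_0^{2\pi}K(t)\sin\rho t\,dt$ with $K\in L_2(0,2\pi)$; dividing by $2\rho$ gives the second formula in \eqref{asymptD}. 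The reality of $N$ and $K$ follows from the reality of $\sigma_1$ and $\sigma_{p+1}$, which makes all the kernels $P_i$ real, so that $D_1(\la)$ and $2\rho D_2(\la)$ are real for real $\rho$.

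The main obstacle is obtaining, in the singular case, the sharp form of the solution asymptotics used above, namely \emph{explicit leading term plus $L_2(\mathbb R)$-Paley--Wiener remainder}, rather than a mere $o(1)$ estimate; this is where the $W_2^{-1}$ nature of the potential enters and where one must rely on the precise structure of the quasi-derivative representation $C_1^{[1]}=C_1'-\sigma_1 C_1$, including its boundary contribution at $x=\pi$. Once this input is secured, the remaining delicate point is purely formal: some cross terms in $D_1(\la)$ arise carrying a factor $\rho^{-1}$ (from the $\rho^{-1}\sin\rho\pi$-part of $S_{p+1}$), yet these are automatically absorbed into the $O(1)$ cosine transform, since an \emph{even} element of $B_{2,2\pi}$ is necessarily a cosine transform (equivalently, $\rho^{-1}\int_0^{2\pi}h(t)\sin\rho t\,dt=\int_0^{2\pi}\bigl(\int_t^{2\pi}h(s)\,ds\bigr)\cos\rho t\,dt$ by integration by parts). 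With the base representations in hand, the product bookkeeping and the parity argument are then routine.
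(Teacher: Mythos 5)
Your route is essentially the paper's: substitute transformation-operator representations of $C_1(\pi,\la)$, $C_1^{[1]}(\pi,\la)$, $S_{p+1}(\pi,\la)$, $S_{p+1}^{[1]}(\pi,\la)$ into \eqref{defD}, multiply out, identify the leading terms $-\cos 2\rho\pi$ and $\frac{\sin 2\rho\pi}{2\rho}$, and realize the remainders as even (resp.\ odd) elements of $B_{2,2\pi}$, hence as cosine (resp.\ sine) transforms. The parity argument, the absorption of the $\rho^{-1}$ cross terms into the cosine transform, and the reality of $N,K$ are all handled correctly and match the paper.

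The one step that fails as written is your input representation for the quasi-derivative: you posit $C_1^{[1]}(\pi,\la)=-\rho\sin\rho\pi+\int_0^\pi P_4(t)\cos\rho t\,dt$ with a bounded, square-integrable remainder. That is the regular-potential form; for $q_1=\sigma_1'$ with $\sigma_1\in L_2$ only, the correct representation (the paper's \eqref{intCS}, from Hryniv--Mykytyuk) is
\begin{equation*}
C_1^{[1]}(\pi,\la)=-\rho\sin\rho\pi+\rho\int_0^\pi N_1(t)\sin\rho t\,dt+C_1^{[1]}(\pi,0),
\end{equation*}
whose remainder is $\rho$ times a $B_{2,\pi}$ function plus a constant --- it is neither bounded on $\mathbb R$ nor in $L_2(\mathbb R)$. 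Consequently your bookkeeping step, ``every remainder is both square-integrable and bounded on $\mathbb R$, therefore all cross terms and remainder$\times$remainder terms lie in $B_{2,2\pi}$,'' does not apply to the terms of $C_1^{[1]}(\pi,\la)S_{p+1}(\pi,\la)$ involving this remainder. The lemma nevertheless holds because the extra factor $\rho$ is exactly cancelled by the $\rho^{-1}$ present in \emph{both} terms of the representation of $S_{p+1}(\pi,\la)$: the products become $\sin\rho\pi\int_0^\pi N_1(t)\sin\rho t\,dt$ and $\bigl(\int_0^\pi N_1\sin\rho t\,dt\bigr)\bigl(\int_0^\pi K_{p+1}\sin\rho t\,dt\bigr)$, both even elements of $B_{2,2\pi}$, while the constant $C_1^{[1]}(\pi,0)$ multiplies $S_{p+1}(\pi,\la)$, an even $B_{2,\pi}$ function. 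You do flag the quasi-derivative asymptotics as the crux, but the specific form you commit to is false in the $W_2^{-1}$ setting, so the argument needs this correction (and the accompanying cancellation) to go through.
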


\begin{proof}
Using the transformation operators \cite{HM04-transform}, one can obtain the following relations (see \cite{HM03, HM04-2spectra}):
\begin{equation} \label{intCS}
\arraycolsep=1.4pt\def\arraystretch{2.2}
\left.
\begin{array}{ll}
C_1(\pi, \la) & = \cos \rho \pi + \displaystyle\int_0^{\pi} K_1(t) \cos \rho t \, dt,  \\ 
C_1^{[1]}(\pi, \la) & = - \rho \sin \rho \pi + \rho \displaystyle\int_0^{\pi} N_1(t) \sin \rho t \, dt + C_1^{[1]}(\pi, 0), \\ 
S_{p+1}(\pi, \la) & = \dfrac{\sin \rho \pi}{\rho} + \dfrac{1}{\rho} \displaystyle\int_0^{\pi} K_{p+1}(t) \sin \rho t \, dt, \\ 
S_{p+1}^{[1]}(\pi, \la) & = \cos \rho \pi +  \displaystyle\int_0^{\pi} N_{p+1}(t) \cos \rho t \, dt, 
\end{array}
\right\}
\end{equation}
where $K_j, N_j \in L_2(0, \pi)$, $j \in \{1, p+1\}$. Substituting these relations into \eqref{defD}, we get
$$
D_1(\la) = \sin^2 \rho \pi - \cos^2 \rho \pi + F_1(\rho), \quad D_2(\la) = \frac{\cos \rho \pi \sin \rho \pi}{\rho} + \frac{1}{\rho} F_2(\rho), 
$$
where $F_1, F_2 \in B_{2, 2 \pi}$, $F_1$ is even and $F_2$ is odd. Therefore they can be represented in the form
$$
F_1(\rho) = \int_0^{2 \pi} N(t) \cos \rho t \, dt, \quad F_2(\rho) = \int_0^{2 \pi} K(t) \sin \rho t \, dt, \quad N, K \in L_2(0, 2 \pi).
$$
Thus, we arrive at \eqref{asymptD}.
\end{proof}


Now let us study the boundary value problem $L_0$ and its eigenvalues $\{ \mu_{nk} \}_{n \in \mathbb N, \, k = 1, 2}$.
Introduce the Weyl function $M^N_{p+1}(\la) = -\dfrac{C_{p+1}^{[1]}(\pi, \la)}{C_{p+1}(\pi, \la)}$.
Similarly to \eqref{defg}, we obtain the following relation under the assumption ($A_2$):
\begin{equation} \label{defhN}
M_1(\mu_{nk}) + M_{p+1}^N (\mu_{nk}) = - \sum_{\substack{j = 2 \\ j \ne p+1}}^m M_j(\mu_{nk}) =: h_{nk}^N, \quad n \in \mathbb N, \: k = 1, 2.
\end{equation} 
Denote 
\begin{equation} \label{defh}
M_1(\mu_{nk}) + M_{p+1}(\mu_{nk}) =: h_{nk}, \quad n \in \mathbb N, \, k = 1, 2.
\end{equation}
Then
$$
h_{nk}^N - h_{nk} = M_{p+1}^N (\mu_{nk}) - M_{p+1}(\mu_{nk}) = \frac{S_{p+1}^{[1]}(\pi, \mu_{nk}) C_{p+1}(\pi, \mu_{nk}) - C^{[1]}_{p+1}(\pi, \mu_{nk}) S_{p+1}(\pi, \mu_{nk})}{C_{p+1}(\pi, \mu_{nk}) S_{p+1}(\pi, \mu_{nk})}.
$$
Using \eqref{eqv} and \eqref{init}, one can easily show that $S_{p+1}^{[1]}(x, \la) C_{p+1}(x, \la) - C^{[1]}_{p+1}(x, \la) S_{p+1}(x, \la) \equiv 1$ for all $x \in (0, \pi)$, $\la \in \mathbb C$. Thus, we get
\begin{equation} \label{CS}
C_{p+1}(\pi, \mu_{nk}) S_{p+1}(\pi, \mu_{nk}) = \frac{1}{h_{nk}^N - h_{nk}}, \quad n \in \mathbb N, \, k = 1, 2.
\end{equation}

\bigskip

{\large \bf 3. Uniqueness theorem}

\bigskip
Together with $L$ and $L_0$, consider other boundary value problems $\tilde L$ and $\tilde L_0$ of the same form, but with different potentials $\{ \tilde \sigma_j \}_{j = 1}^m$. The values of $m$ and $p$ remains the same. We agree that if a certain symbol $\gamma$ denotes an object related to $L$ or $L_0$, then the corresponding symbol $\tilde \gamma$ with tilde denotes the analogous object related to $\tilde L$ or $\tilde L_0$.

\begin{lem} \label{lem:uniqM}
Let the problems $L$ and $\tilde L$ satisfy the assumption ($A_1$), and let $\sigma_j = \tilde \sigma_j$, $j = \overline{1,m}\backslash \{ 1, p+1 \}$,  $\la_{nk} = \tilde \la_{nk}$, $n \in \mathbb N$, $k = \overline{1, 4}$.
Then $M_1(\la) + M_{p + 1}(\la) \equiv \tilde M_1(\la) + \tilde M_{p+1}(\la)$. 
\end{lem}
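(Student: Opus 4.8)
The plan is to convert the identity of fractional spectra into a statement about a single entire function, and then to kill that function by completeness of a trigonometric system. First I would observe that the hypothesis $\sigma_j = \tilde\sigma_j$ for $j \in \overline{1,m}\backslash\{1,p+1\}$ forces $M_j \equiv \tilde M_j$ for exactly these indices, because by \eqref{defM} each Weyl function $M_j$ is determined by the potential $\sigma_j$ of its own edge alone. The right-hand sum in \eqref{defg} runs over $j \in \overline{2,m}\backslash\{p+1\}$, i.e.\ over indices disjoint from $\{1,p+1\}$, so $g_{nk} = \tilde g_{nk}$. Combining this with $\la_{nk} = \tilde\la_{nk}$ and applying \eqref{defg} to both $L$ and $\tilde L$ gives $(M_1 + M_{p+1})(\la_{nk}) = g_{nk} = \tilde g_{nk} = (\tilde M_1 + \tilde M_{p+1})(\la_{nk})$ for all $n \in \mathbb N$, $k = \overline{1,4}$.

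Next I would introduce the entire function $G(\la) := D_1(\la)\tilde D_2(\la) - \tilde D_1(\la) D_2(\la)$, so that by \eqref{sumMfrac} the meromorphic difference satisfies $(M_1 + M_{p+1})(\la) - (\tilde M_1 + \tilde M_{p+1})(\la) = G(\la)/(D_2(\la)\tilde D_2(\la))$. Assumption ($A_1$), applied to both problems, yields $D_2(\la_{nk}) = C_1(\pi,\la_{nk}) S_{p+1}(\pi,\la_{nk}) \ne 0$ and likewise $\tilde D_2(\la_{nk}) \ne 0$; hence the previous step upgrades to $G(\la_{nk}) = 0$ for all $n \in \mathbb N$, $k = \overline{1,4}$. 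It therefore suffices to prove $G \equiv 0$, which immediately gives the claimed identity.

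To control $G$, I would substitute the representations \eqref{asymptD} from Lemma~\ref{lem:asymptD} together with \eqref{defD}. The products $D_1\tilde D_2$ and $\tilde D_1 D_2$ share the same leading term $-\sin 4\rho\pi/(4\rho)$, so this dominant (bounded, non-$L_2$) part cancels in $G$. A direct expansion then leaves $\rho G(\rho)$ as a sum of terms of the shape $\cos 2\rho\pi$ or $\sin 2\rho\pi$ times a $B_{2,2\pi}$ factor, plus a product of two $B_{2,2\pi}$ factors; each of these lies in $L_2(\mathbb R)$, so that $\rho G$ is an odd function in the Paley--Wiener class $B_{2,4\pi}$. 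Consequently it admits a representation $\rho G(\rho) = \int_0^{4\pi} w(t)\sin\rho t\,dt$ with $w \in L_2(0,4\pi)$, and the vanishing $G(\la_{nk}) = 0$ translates into $\int_0^{4\pi} w(t)\sin\rho_{nk} t\,dt = 0$ for all $n \in \mathbb N$, $k = \overline{1,4}$, where $\rho_{nk} = \sqrt{\la_{nk}}$.

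The decisive step is then the completeness of the system $\{\sin\rho_{nk} t\}_{n \in \mathbb N,\, k = \overline{1,4}}$ in $L_2(0,4\pi)$, which I would extract from the Riesz-basis analysis of Appendix~A; granting it, the vanishing integrals force $w \equiv 0$, hence $\rho G \equiv 0$, $G \equiv 0$, and finally $M_1 + M_{p+1} \equiv \tilde M_1 + \tilde M_{p+1}$. I expect this completeness assertion to be the main obstacle. Indeed, by \eqref{asymptrho} the four series $\{\rho_{nk}\}_{k=\overline{1,4}}$ have counting density exactly $4$ on the positive half-line, which is precisely the critical density for functions of exponential type $4\pi$ on $(0,4\pi)$; at this borderline a nonzero $B_{2,4\pi}$ function may vanish on a set of this density (as $\sin$ shows), so no elementary growth or density estimate can close the argument. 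Only the precise perturbation analysis of the $\rho_{nk}$ against the reference frequencies of $L_2(0,4\pi)$, carried out in Appendix~A, delivers the required Riesz-basis property and hence completeness.
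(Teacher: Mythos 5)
Your reduction coincides with the paper's: from $g_{nk}=\tilde g_{nk}$ and \eqref{sumMfrac} you get that $G(\la):=D_1(\la)\tilde D_2(\la)-\tilde D_1(\la)D_2(\la)$ (the paper's $H$) vanishes at $\{\la_{nk}\}_{n\in\mathbb N,\,k=\overline{1,4}}$, and that $\rho G(\rho^2)$ is an odd function of the class $B_{2,4\pi}$; the whole content of the lemma is then to show $G\equiv 0$. The gap is at exactly that decisive step. You close it by invoking the completeness of $\{\sin\rho_{nk}t\}_{n\in\mathbb N,\,k=\overline{1,4}}$ in $L_2(0,4\pi)$, ``extracted from the Riesz-basis analysis of Appendix~A''. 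No such statement is proved there: Theorem~\ref{thm:Riesz} concerns the vector system $\{v_{nk}\}$ in $L_2(0,2\pi)\oplus L_2(0,2\pi)$, whose first components are cosines weighted by $\rho_{nk}/g_{nk}$, and Theorem~\ref{thm:Riesz2} concerns a sine system built from only the two branches $\{\mu_{nk}\}_{k=1,2}$ in $L_2(0,2\pi)$; neither implies the scalar completeness on $(0,4\pi)$ that you need. And, as you yourself observe, the system sits at critical density, so no soft density or $l_2$-perturbation argument is available either (the four branches cluster near $n-1+\be$, $n-\be$, $n-\frac12$, $n$ with $\be=\al/\pi$, which is not a small perturbation of the reference frequencies $\{m/4\}$ of $L_2(0,4\pi)$).

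The completeness you appeal to is in fact true, but its proof is precisely the argument the paper applies directly to $G$: form the canonical product $P(\la)=\prod_{k=1}^4\prod_{n=1}^{\iy}(1-\la/\la_{nk})$, use Corollary~\ref{cor:prodla} for the lower bound $|P(\rho^2)|\ge C|\rho|^{-1}\exp(4|\mbox{Im}\,\rho|\pi)$ in a sector $\eps<\arg\rho<\pi-\eps$, conclude $G\equiv CP$ by Phragm\'en--Lindel\"of and Liouville, and then kill the constant by noting that $\rho G(\rho^2)\in B_{2,4\pi}$ while \eqref{reprP} shows $\rho P(\rho^2)\notin B_{2,4\pi}$. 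So the ``key lemma'' you cite is not a shortcut around the hard part --- it \emph{is} the hard part, and as written your proof defers it to a result that does not exist in the paper. To repair the argument, either run the product comparison on $G$ directly (as the paper does), or state and prove the scalar completeness as a separate lemma by that same product technique; everything before this point in your write-up is correct and matches the paper.
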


\begin{proof}
The relation \eqref{defg} implies
$$
M_1(\la_{nk}) + M_{p+1}(\la_{nk}) = \tilde M_1(\la_{nk}) + \tilde M_{p+1}(\la_{nk}), \quad n \in \mathbb N, \quad k = \overline{1, 4}.
$$
Taking the relation \eqref{sumMfrac} into account, we get
$$
D_1(\la_{nk}) \tilde D_2(\la_{nk}) - \tilde D_1(\la_{nk}) D_2(\la_{nk}) = 0.
$$
Thus, the function
$$
H(\la) = D_1(\la) \tilde D_2(\la) - \tilde D_1(\la) D_2(\la) 
$$
has zeros at the points $\{ \la_{nk} \}_{n \in \mathbb N, \, k = \overline{1, 4}}$. 
Construct the entire function
$$
P(\la) := \prod_{k = 1}^4 \prod_{n = 1}^{\iy} \left(1 - \frac{\la}{\la_{nk}} \right).
$$
(The case $\la_{nk} = 0$ requires minor modifications). Obviously, $\dfrac{H(\la)}{P(\la)}$ is an entire function of order $\frac{1}{2}$. According to Lemma~\ref{lem:asymptD} and Corollary~\ref{cor:prodla} from Appendix~B, the estimate $\left|\dfrac{H(\la)}{P(\la)} \right| \le C$ holds for $\la = \rho^2$, $\eps < \arg \rho < \pi - \eps$. Applying Phragmen-Lindel\"of's theorem \cite{BFY} and Liouville's theorem, we conclude that $H(\la) \equiv C P(\la)$. 
By virtue of Lemma~\ref{lem:asymptD}, the function $\rho H(\rho^2)$ belongs to the Paley-Wiener class $B_{2, 4 \pi}$, as a function of $\rho$, but $\rho P(\rho^2) \not \in B_{2, 4 \pi}$ (see \eqref{reprP}). Consequently, $C = 0$ and $H(\la) \equiv 0$. Thus, $\dfrac{D_1(\la)}{D_2(\la)} \equiv \dfrac{\tilde D_1(\la)}{\tilde D_2(\la)}$, and the lemma is proved.
\end{proof}

\begin{remark}
If together with the potentials $\{ \sigma_j \}_{j = \overline{1, m} \backslash \{ 1, p + 1\}}$ even more eigenvalues of $L$ are given, than the collection $\{ \la_{nk} \}_{n \in \mathbb N, \, k = \overline{1, 4}}$ contains, we can not obtain more information, than the sum of the Weyl functions $M_1(\la) + M_{p+1}(\la)$. We need some additional data, to ``separate'' the potentials $\sigma_1$ and $\sigma_{p+1}$.
\end{remark}

\begin{thm} \label{thm:uniq}
Let $\sigma_j = \tilde \sigma_j$, $j = \overline{1,m}\backslash \{ 1, p+1 \}$,
$\la_{nk} = \tilde \la_{nk}$ for $n \in \mathbb N$, $k = \overline{1, 4}$, and $\mu_{nk} = \tilde \mu_{nk}$ for $n \in \mathbb N$, $k = 1, 2$.
Assume that ($A_1$), ($A_2$) hold for the problems $L$, $L_1$, $\tilde L$, $\tilde L_1$.
Then $\sigma_1 = \tilde \sigma_1$ and $\sigma_{p+1} = \tilde \sigma_{p+1}$ in $L_2(0, \pi)$. Thus, the solution of IP is unique.
\end{thm}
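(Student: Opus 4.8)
The plan is to reduce everything to the single edge $p+1$ and recover $\sigma_{p+1}$; the potential $\sigma_1$ will then come for free. Indeed, under the present hypotheses Lemma~\ref{lem:uniqM} already supplies the functional identity $M_1(\la)+M_{p+1}(\la)\equiv\tilde M_1(\la)+\tilde M_{p+1}(\la)$. Hence it suffices to prove $M_{p+1}\equiv\tilde M_{p+1}$: this will give $M_1\equiv\tilde M_1$, and since a single-edge Weyl function determines its potential uniquely \cite{FIY08}, we obtain both $\sigma_1=\tilde\sigma_1$ and $\sigma_{p+1}=\tilde\sigma_{p+1}$. The genuinely new ingredient must therefore be the second spectrum $\{\mu_{nk}\}$.

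First I would read off numerical data from $L_0$. Because $\sigma_j=\tilde\sigma_j$ for $j\in\overline{1,m}\setminus\{1,p+1\}$ and $\mu_{nk}=\tilde\mu_{nk}$, the right-hand side of \eqref{defhN} is a sum of known Weyl functions evaluated at common points, so $h_{nk}^N=\tilde h_{nk}^N$; and since $h_{nk}=(M_1+M_{p+1})(\mu_{nk})$ by \eqref{defh}, the identity of Lemma~\ref{lem:uniqM} gives $h_{nk}=\tilde h_{nk}$ as well. Substituting into \eqref{CS} yields the pointwise equalities
\[
C_{p+1}(\pi,\mu_{nk})\,S_{p+1}(\pi,\mu_{nk})=\tilde C_{p+1}(\pi,\mu_{nk})\,\tilde S_{p+1}(\pi,\mu_{nk}),\qquad n\in\mathbb N,\ k=1,2.
\]

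The crux, and the step I expect to be the main obstacle, is to promote these discrete equalities to the functional identity $C_{p+1}(\pi,\la)S_{p+1}(\pi,\la)\equiv\tilde C_{p+1}(\pi,\la)\tilde S_{p+1}(\pi,\la)$. I would mimic the entire-function argument of Lemma~\ref{lem:uniqM}: put $R(\la):=C_{p+1}(\pi,\la)S_{p+1}(\pi,\la)-\tilde C_{p+1}(\pi,\la)\tilde S_{p+1}(\pi,\la)$, which vanishes at every $\mu_{nk}$, form the canonical product $P_\mu(\la):=\prod_{k=1,2}\prod_{n}(1-\la/\mu_{nk})$, and note that $R/P_\mu$ is entire of order $\tfrac12$. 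The product $C_{p+1}(\pi,\la)S_{p+1}(\pi,\la)$ has exactly the structure of $D_2(\la)$ in Lemma~\ref{lem:asymptD}, with leading term $\tfrac{\sin 2\rho\pi}{2\rho}$, which cancels in $R$; hence $\rho R(\rho^2)\in B_{2,2\pi}$. Combined with the growth estimate for $P_\mu$ coming from Corollary~\ref{cor:prodla}, Phragm\'en--Lindel\"of and Liouville give $R\equiv C\,P_\mu$. The delicate point is that by \eqref{asymptmu} the sequence $\{\sqrt{\mu_{nk}}\}$ has density exactly $2$, the same as the zero density of a function of exponential type $2\pi$, so the comparison is at the threshold; it is settled precisely as in Lemma~\ref{lem:uniqM}, since $\rho R(\rho^2)\in B_{2,2\pi}\subset L_2(\mathbb R)$ whereas $\rho P_\mu(\rho^2)\notin B_{2,2\pi}$, forcing $C=0$ and $R\equiv 0$.

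It remains to separate the two factors and conclude. By the asymptotics, the zeros of $S_{p+1}(\pi,\cdot)$, which are the Dirichlet--Dirichlet spectrum of $\sigma_{p+1}$, satisfy $\rho\approx n$, whereas those of $C_{p+1}(\pi,\cdot)$, the Neumann--Dirichlet spectrum, satisfy $\rho\approx n-\tfrac12$, and likewise for the tilde functions. Since these two families are asymptotically disjoint, the product identity forces the zeros of $S_{p+1}(\pi,\cdot)$ and $\tilde S_{p+1}(\pi,\cdot)$ to coincide, and similarly for $C_{p+1}(\pi,\cdot)$ and $\tilde C_{p+1}(\pi,\cdot)$; equivalently, the Dirichlet--Dirichlet and Neumann--Dirichlet spectra of $\sigma_{p+1}$ and $\tilde\sigma_{p+1}$ agree (these two boundary problems share the Dirichlet condition at $x=\pi$). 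Borg's two-spectra uniqueness theorem for singular potentials \cite{HM04-2spectra} then yields $\sigma_{p+1}=\tilde\sigma_{p+1}$ in $L_2(0,\pi)$, whence $M_{p+1}\equiv\tilde M_{p+1}$, and the reduction of the first paragraph gives $\sigma_1=\tilde\sigma_1$, completing the proof.
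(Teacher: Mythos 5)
Your proposal follows essentially the same route as the paper: Lemma~\ref{lem:uniqM} gives the sum $M_1+M_{p+1}$, the second spectrum gives $h_{nk}=\tilde h_{nk}$ and $h_{nk}^N=\tilde h_{nk}^N$, relation \eqref{CS} turns these into pointwise equalities for the product $C_{p+1}(\pi,\cdot)S_{p+1}(\pi,\cdot)$ at the $\mu_{nk}$, an entire-function comparison promotes this to an identity, and the two-spectra theorem of \cite{HM04-2spectra} recovers $\sigma_{p+1}$, after which $M_1\equiv\tilde M_1$ and $\sigma_1$ follow. Two remarks. First, a citation slip: the lower bound for $P_\mu$ should come from Corollary~\ref{cor:shift} (applied with $a=\al_1/\pi$, giving type $2\pi$), not from Corollary~\ref{cor:prodla}, which concerns the product over the four $\la$-sequences and has type $4\pi$; with the correct corollary one gets $|R/P_\mu|\le C/|\rho|$ in a sector directly, so Phragm\'en--Lindel\"of and Liouville already force $R\equiv 0$ without the extra Paley--Wiener comparison (though your version of that step is also valid).

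The one genuine gap is in separating the factors. You argue that because the Dirichlet zeros $\{\theta_n\}$ sit near the integers and the Neumann--Dirichlet zeros $\{\nu_n\}$ near the half-integers, the identity of the products forces the individual zero sequences to coincide. Asymptotic disjointness only pins down the assignment of zeros with large index: the remainders $\varkappa_n$ are merely $\ell_2$, so for finitely many small $n$ they need not be small, and a priori the two factorizations of the common zero multiset could reshuffle those initial zeros between the $C$-factor and the $S$-factor while respecting the same asymptotics. What closes this is the strict interlacing $\nu_0<\theta_1<\nu_1<\theta_2<\dots$ of the two spectra (property \eqref{interlace}, from \cite{HM04-2spectra}), which holds for both the untilded and tilded problems and determines the splitting of the ordered combined zero set uniquely; this is exactly the ingredient the paper invokes at this point. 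With that replacement your argument is complete.
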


\begin{proof}
By virtue of Lemma~\ref{lem:uniqM} 
\begin{equation} \label{sumMeq}
M_1(\la) + M_{p+1}(\la) = \tilde M_1(\la) + \tilde M_{p+1}(\la).
\end{equation} 
The relations \eqref{defhN}, \eqref{defh} and similar relations for $\tilde L$ and $\tilde L_0$ imply
$h_{nk} = \tilde h_{nk}$, $h_{nk}^N = \tilde h_{nk}^N$ for $n \in \mathbb N$, $k = 1, 2$. Taking \eqref{CS} into account, we conclude that the entire function function
$$
H(\la) := C_{p + 1}(\pi, \la) S_{p+1}(\pi, \la) - \tilde C_{p+1}(\pi, \la) \tilde S_{p+1}(\pi, \la)
$$
has zeros $\{ \mu_{nk} \}_{n \in \mathbb N, \, k = 1, 2}$.
Similarly to \eqref{intCS}, one can derive the relations 
\begin{equation} \label{intCS2}
\arraycolsep=1.4pt\def\arraystretch{2.2}
\left.
\begin{array}{ll}
C_{p + 1}(\pi, \la) & = \cos \rho \pi + \displaystyle\int_0^{\pi} T_{p+1}(t) \cos \rho t \, dt, \\
S_{p + 1}(\pi, \la) & = \dfrac{\sin \rho \pi}{\rho} + \dfrac{1}{\rho} \displaystyle\int_0^{\pi} K_{p+1}(t) \sin \rho t \, dt,
\end{array}
\right\}
\end{equation}
where $T_{p+1}, K_{p+1} \in L_2(0, \pi)$. Hence $|H(\la)| \le C |\rho|^{-1} \exp(2 |\mbox{Im}\, \rho| \pi)$ for $|\rho| \ge \rho^* > 0$.

Construct the function
$$
P(\la) := \prod_{k = 1}^2 \prod_{n = 1}^{\iy} \left( 1 - \frac{\la}{\mu_{nk}}\right).
$$
(The case $\mu_{nk} = 0$ requires minor changes). In view of the asymptotics \eqref{asymptmu}, one can apply Corollary~\ref{cor:shift} from Appendix~B to $P(\la)$. Consequently, the entire function $\dfrac{H(\la)}{P(\la)}$ admits the estimate $\left| \dfrac{H(\la)}{P(\la)}\right| \le \dfrac{C}{|\rho|}$ for $\la = \rho^2$, $\eps < \arg \rho < \pi - \eps$, $|\rho| > \rho^*$ for some $\eps > 0$ and $\rho^* > 0$. By Phragmen-Lindel\"of's and Liouville's theorems, we get $H(\la) \equiv 0$. Hence $C_{p+1}(\pi, \la) S_{p+1}(\pi, \la) \equiv \tilde C_{p+1}(\pi, \la) \tilde S_{p+1} (\pi, \la)$. 

The functions $C_{p + 1}(\pi, \la)$ and $S_{p + 1}(\pi, \la)$ have real zeros $\{ \nu_n \}_{n \in \mathbb N_0}$ and $\{ \theta_n \}_{n \in \mathbb N}$, which interlace~\cite{HM04-2spectra}:
\begin{equation} \label{interlace}
	\nu_0 < \theta_1 < \nu_1 < \theta_2 < \nu_2 < \dots
\end{equation}
The same assertion is valid for $\tilde C_{p + 1}(\pi, \la)$ and $\tilde S_{p + 1}(\pi, \la)$. Consequently, $\nu_n = \tilde \nu_n$, for all $n \in \mathbb N_0$ and $\theta_n = \tilde \theta_n$ for $n \in \mathbb N$. It has been proved in \cite{HM04-2spectra}, that the two spectra $\{ \nu_n \}_{n \in \mathbb N_0}$ and $\{ \theta_n \}_{n \in \mathbb N}$ determine the potential $\sigma_{p + 1}$ uniquely. Hence $M_{p + 1}(\la) \equiv \tilde M_{p + 1}(\la)$. Together with \eqref{sumMeq}, this yields $M_1(\la) \equiv \tilde M_1(\la)$. The Weyl function $M_1(\la)$ determines the potential $\sigma_1$ uniquely (see \cite{FIY08}). Thus, $\sigma_1 = \tilde \sigma_1$ and $\sigma_{p + 1} = \tilde \sigma_{p + 1}$ in $L_2(0, \pi)$.
\end{proof}

\bigskip

{\large \bf 4. Solution of IP}

\bigskip

In this section, we develop a constructive algorithm for the solution of IP. First, we show that, using the eigenvalues $\{ \la_{nk} \}_{n \in \mathbb N, \, k = \overline{1, 4}}$, one can obtain a sequence of coefficients of some vector function $f(t)$ with respect to the specially constructed Riesz basis. Recovering $f(t)$ from its coefficients, we can find the sum $M_1(\la) + M_{p + 1}(\la)$. Then we add the part of the second spectrum $\{ \mu_{nk} \}_{n \in \mathbb N, \, k = 1, 2}$ and find the potentials $\sigma_1$ and $\sigma_{p+1}$.

Substituting \eqref{sumMfrac} and \eqref{asymptD} into \eqref{defg}, we obtain
\begin{equation} \label{NK}
\frac{\rho_{nk}}{g_{nk}} \int_0^{2 \pi} N(t) \cos \rho_{nk} t \, dt +  \int_0^{2 \pi} K(t) \sin \rho_{nk} t \, dt = f_{nk}, \quad n \in \mathbb N, \: k = \overline{1, 4},  
\end{equation}
\begin{equation} \label{deff}
f_{nk} := \frac{\rho_{nk}}{g_{nk}}\cos 2 \rho_{nk} \pi + \frac{1}{2} \sin \rho_{nk} \pi.
\end{equation}

For simplicity, we assume that

\medskip

($A_3$) the numbers $\{ \la_{nk} \}_{n \in \mathbb N, \, k = \overline{1, 4}}$ are distinct and positive;

\smallskip

($A_4$) $g_{nk} \ne 0$, $n \in \mathbb N$, $k = \overline{1, 4}$.

\medskip

These assumptions are not principal. The case of multiple eigenvalues was discussed in \cite{Bond17}, while the other conditions can be easily achieved by a shift $q_j \to q_j + C$, $j = \overline{1, m}$.

Denote
\begin{equation} \label{defv}
f(t) = \begin{bmatrix} N(t) \\ K(t) \end{bmatrix}, \quad
v_{nk}(t) = \begin{bmatrix} \frac{\rho_{nk}}{g_{nk}}\cos \rho_{nk} t \\ \sin \rho_{nk} t \end{bmatrix}, \quad n \in \mathbb N, \: k = \overline{1, 4}.
\end{equation}

Consider the real Hilbert space $\mathcal{H} := L_2(0, 2\pi) \oplus L_2(0, 2\pi)$.
The scalar product and the norm in $\mathcal H$ are defined as follows
$$
   (g, h)_{\mathcal H} = \int_0^{2 \pi} ( g_1(t) h_1(t) + g_2(t) h_2(t)) \, dt, \quad
   \| g \|_{\mathcal H} = \sqrt{\int_0^{2 \pi} (g_1^2(t) + g_2^2(t)) \, dt}, 
$$
$$
   g = \begin{bmatrix} g_1 \\ g_2 \end{bmatrix}, \quad h = \begin{bmatrix} h_1 \\ h_2 \end{bmatrix}, \quad g, h \in \mathcal H.    
$$
One can rewrite the relation \eqref{NK} in the form
\begin{equation} \label{scal}
(f, v_{nk})_{\mathcal H} = f_{nk}, \quad n \in \mathbb N, \quad k = \overline{1, 4}.
\end{equation}

In Appendix A, we will prove the following theorem.

\begin{thm} \label{thm:Riesz}
Under the assumptions ($A_1$), ($A_3$), ($A_4$),
the system $\{ v_{nk} \}_{n \in \mathbb N, \, k = \overline{1, 4}}$ is a Riesz basis in $\mathcal H$.
\end{thm}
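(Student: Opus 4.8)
The plan is to prove the Riesz-basis property by the classical scheme of comparison with an explicitly described \emph{unperturbed} (model) system, combined with Bari's theorem: a system that is complete, minimal, and quadratically close to a Riesz basis is itself a Riesz basis. Accordingly, the argument splits into four parts: (i) extraction of the leading asymptotics of the coefficients $a_{nk} := \rho_{nk}/g_{nk}$; (ii) construction of a model system and a proof that it is a Riesz basis in $\mathcal H$; (iii) the quadratic-closeness estimate $\sum_{n,k}\|v_{nk}-v_{nk}^0\|_{\mathcal H}^2<\infty$; and (iv) verification of completeness and minimality of $\{v_{nk}\}$.

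First I would analyze the scalars $a_{nk}$. Since the potentials on the edges $j\in\overline{1,m}\setminus\{1,p+1\}$ are fixed, the Weyl functions $M_j$ entering \eqref{defg} have the standard asymptotics $M_j(\la)=\rho\tan\rho\pi+O(1)$ for $j=\overline{2,p}$ and $M_j(\la)=-\rho\cot\rho\pi+O(1)$ for $j=\overline{p+2,m}$, uniformly away from the poles. Evaluating these at $\rho=\rho_{nk}$ and inserting the frequency asymptotics \eqref{asymptrho}, one computes for each $k$ a limit $a_k^0:=\lim_n a_{nk}$ together with the bound $\{a_{nk}-a_k^0\}\in\ell_2$ (the $\ell_2$-remainder coming from the $\varkappa_n$-terms in \eqref{asymptrho} and the transform remainders in \eqref{intCS}). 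In particular, the families $k=3,4$, whose frequencies approach the points where $\tan\rho\pi$, respectively $\cot\rho\pi$, blow up, yield $a_3^0=a_4^0=0$, whereas $k=1,2$ give finite limits with $a_2^0=-a_1^0$. Assumption ($A_4$) is used here to keep every $a_{nk}$ finite; the degenerate configurations in which the leading term of $g_{nk}$ itself vanishes, forcing a passage to the next order of the asymptotics, must be treated separately.

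The heart of the proof is part (ii). Define $v_{nk}^0(t)=(a_k^0\cos\rho_{nk}^0 t,\ \sin\rho_{nk}^0 t)^{T}$, with $\rho_{nk}^0$ the leading terms of \eqref{asymptrho}. I would identify $\mathcal H=L_2(0,2\pi)\oplus L_2(0,2\pi)$ (up to a constant factor) isometrically with $L_2(-2\pi,2\pi)$ through the even/odd extension $(f,g)\mapsto F$, $F(t)=f(|t|)+\operatorname{sgn}(t)\,g(|t|)$, under which each $v_{nk}^0$ is carried to the two-term exponential combination $\tfrac{a_k^0-i}{2}e^{i\rho_{nk}^0 t}+\tfrac{a_k^0+i}{2}e^{-i\rho_{nk}^0 t}$. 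The resulting system is governed by a generating entire function $G(\rho)$ of exponential type $2\pi$ whose zeros are exactly the numbers $\pm\rho_{nk}^0$; by \eqref{asymptrho} these zeros form a density-correct perturbation of the lattice of an orthogonal exponential basis of $L_2(-2\pi,2\pi)$, and $G$ is of sine type. The Riesz-basis property then follows from the classical criterion relating sine-type generating functions to Riesz bases of exponentials (Levin--Golovin and Pavlov), the requisite growth bounds for $G$ being supplied by the product estimates of Appendix~B (cf.\ Corollary~\ref{cor:prodla}). This reduction, together with the bookkeeping needed to accommodate the coupling coefficients $a_k^0$ — in particular the vanishing $a_3^0=a_4^0=0$, which makes the $k=3,4$ vectors purely odd and shifts the whole cosine content onto the families $k=1,2$ — is the main obstacle.

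Finally, part (iii) is immediate from part (i) via $\|v_{nk}-v_{nk}^0\|_{\mathcal H}\le C(|a_{nk}-a_k^0|+|\rho_{nk}-\rho_{nk}^0|)$, where both terms are $\ell_2$ in $(n,k)$ since $\rho_{nk}-\rho_{nk}^0=\varkappa_n$. For part (iv), minimality follows by exhibiting the biorthogonal system built from $G$, while completeness is reduced, exactly as in the uniqueness proofs (Lemma~\ref{lem:uniqM}, Theorem~\ref{thm:uniq}), to showing that an element $(f,g)\perp\{v_{nk}\}$ produces even/odd Paley--Wiener transforms $\hat f_c,\hat g_s\in B_{2,2\pi}$ satisfying $\rho_{nk}\hat f_c(\rho_{nk})+g_{nk}\hat g_s(\rho_{nk})=0$ for all $n,k$; a Phragmen--Lindel\"of and Liouville argument of the type used in Section~3 then forces $\hat f_c\equiv\hat g_s\equiv 0$, hence $(f,g)=0$. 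Here ($A_1$) guarantees that \eqref{defg} holds and ($A_3$) that the $\rho_{nk}$ are distinct. Bari's theorem then yields the assertion.
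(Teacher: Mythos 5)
Your overall skeleton (model system, quadratic closeness, completeness, Bari/Fredholm) coincides with the paper's: your parts (i) and (iii) are Lemma~\ref{lem:estv}, and your part (iv) is Lemma~\ref{lem:complete}, carried out there exactly as you sketch, via $W(\la)$, the product $P(\la)$, Phragmen--Lindel\"of and Liouville. The gap is in part (ii), which is precisely the step the paper singles out as the main difficulty. After your even/odd identification of $\mathcal H$ with $L_2(-2\pi,2\pi)$, the model vectors become two-term combinations $\frac{a_k^0-i}{2}e^{i\rho_{nk}^0t}+\frac{a_k^0+i}{2}e^{-i\rho_{nk}^0t}$, \emph{not} single exponentials, and the classical Levin--Golovin/Pavlov criterion you invoke does not apply to such a system. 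Worse, it cannot be made to apply by any routine adjustment: the frequency set $\{\pm\rho_{nk}^0\}_{n\in\mathbb N,\,k=\overline{1,4}}$ has density $4$ per unit length on $\mathbb R$, twice the critical density for an exponential Riesz basis of $L_2(-2\pi,2\pi)$, so the associated single-exponential system is grossly overcomplete and no sine-type function of exponential type $2\pi$ can have exactly these zeros (a sine-type function of type $2\pi$ has zero density $2$). The counting only balances because each frequency $\rho_{nk}^0$ carries one vector, not two exponentials, and that is exactly the structural feature your reduction discards. You correctly identify the relevant structure ($a_3^0=a_4^0=0$, the cosine content concentrated in $k=1,2$) but then label it ``the main obstacle'' without supplying an argument, so the central claim of the theorem is not actually established.

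The paper resolves this step elementarily (Lemma~\ref{lem:Rieszv0}): it exhibits an explicit bounded operator $A$ with bounded inverse, a shear which subtracts from the second component the sine series $2\cot 2\al\sum_n c_n\sin(n+\be)t$ built from the cosine coefficients $c_n$ of the first component. Because the sine parts of $v_{n1}^0,v_{n2}^0$ have exactly the frequencies $n-1+\be$ and $n-\be$ of their cosine parts, $A$ annihilates them, and $\{Av_{nk}^0\}$ splits into the system $\mathcal C=\{\cos(n+\be)t\}_{n\in\mathbb Z}$ in the first slot and $\{\sin(n-\tfrac12)t\}\cup\{\sin nt\}$ in the second. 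The Riesz-basis property of $\mathcal S$ and $\mathcal C$ in $L_2(0,2\pi)$ is itself proved by a direct Gram-matrix computation with a Hilbert-type bilinear form estimate, not by generating-function criteria. To repair your proof you would need either to reproduce such a triangularization, or to replace the exponential-basis criterion by a genuinely vector-valued one; as written, part (ii) does not go through.
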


In view of Theorem~\ref{thm:Riesz} and the relation \eqref{scal}, the numbers $f_{nk}$ are the coordinates of the vector function $f$ with respect to the Riesz basis, biorthonormal to $\{ v_{nk} \}_{n \in \mathbb N, \, k = \overline{1, 4}}$. Given $\{ v_{nk} \}_{n \in \mathbb N, \, k = 1, 2}$ and $\{ f_{nk} \}_{n \in \mathbb N, \, k = 1, 2}$, we can recover $f$ uniquely. Consequently, we know $N(t)$ and $K(t)$, and can find the sum $M_1(\la) + M_{p + 1}(\la)$ via \eqref{asymptD} and \eqref{sumMfrac}.

Now given $\{ \mu_{nk} \}_{n \in \mathbb N, \, k = 1, 2}$, one can find $h_{nk}^N$ and $h_{nk}$ via \eqref{defhN} and \eqref{defh}, respectively. Consider the relation \eqref{CS}. It follows from \eqref{intCS2}, that
\begin{equation} \label{prodCS}
C_{p + 1}(\pi, \la) S_{p + 1}(\pi, \la) = \frac{\sin 2 \rho \pi}{2 \rho} + \frac{1}{\rho} \int_0^{2 \pi} T(t) \sin \rho t \, dt, 
\end{equation}
where $T \in L_2(0, 2 \pi)$. Substituting \eqref{prodCS} into \eqref{CS}, we obtain
$$
C_{p + 1}(\pi, \mu_{nk}) S_{p + 1}(\pi, \mu_{nk}) = \frac{\sin 2 \sqrt{\mu_{nk}}\pi}{2\sqrt{\mu_{nk}}} + \frac{1}{\sqrt{\mu_{nk}}} \int_0^{2\pi} T(t) \sin \sqrt{\mu_{nk}}t \, dt = 
\frac{1}{h_{nk}^N - h_{nk}} 
$$
for $n \in \mathbb N$, $k = 1, 2$. Then we derive the following system of equations
\begin{equation} \label{sysT}
\int_0^{2 \pi} T(t) \sin \sqrt{\mu_{nk}} t \, dt = \frac{\sqrt{\mu_{nk}}}{h_{nk}^N - h_{nk}} - \frac{1}{2} \sin 2 \sqrt{\mu_{nk}} \pi, \quad n \in \mathbb N, \: k = 1, 2.
\end{equation}
Impose an additional assumption:

\smallskip

($A_5$) the numbers $\{ \mu_{nk} \}_{n \in \mathbb N,\, k = 1, 2}$ are distinct and positive.

\smallskip

The following theorem will be proved in Appendix A.

\begin{thm} \label{thm:Riesz2}
Under the assumptions ($A_2$), ($A_5$), the system $\{ \sin \sqrt{\mu_{nk}} t \}_{n \in \mathbb N, \, k = 1, 2}$ is a Riesz basis in $L_2(0, 2 \pi)$.
\end{thm}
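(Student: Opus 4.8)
The plan is to view $\{\sin\sqrt{\mu_{nk}}\,t\}$ as an $l_2$-perturbation of an explicit model system generated by the zeros of a function of sine type, and then to conclude by a stability (Bari-type) argument. Write $\rho_{nk}:=\sqrt{\mu_{nk}}$ and $\be:=\al_1/\pi$. Since $2\le p\le m-2$ we have $\be\in(0,\tfrac12)$, and by \eqref{asymptmu} $\rho_{n1}=(n-1)+\be+\varkappa_n$, $\rho_{n2}=n-\be+\varkappa_n$ with $\{\varkappa_n\}\in l_2$. Let $\rho^0_{n1}:=(n-1)+\be$ and $\rho^0_{n2}:=n-\be$ be the corresponding model frequencies, so that $\sum_{n,k}|\rho_{nk}-\rho^0_{nk}|^2<\infty$.

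First I would treat the model system. A direct computation shows that the symmetric set $\{\pm\rho^0_{nk}\}_{n\in\mathbb N,\,k=1,2}$ coincides with $\{j+\be:j\in\mathbb Z\}\cup\{j-\be:j\in\mathbb Z\}$, which is exactly the zero set of the entire function $\Phi(\rho)=\sin\pi(\rho-\be)\sin\pi(\rho+\be)=\tfrac12(\cos2\be\pi-\cos2\rho\pi)$. Because $\be\in(0,\tfrac12)$, these zeros are real, simple and separated, while $|\Phi(\rho)|\asymp\exp(2\pi|\mathrm{Im}\,\rho|)$ for $|\mathrm{Im}\,\rho|$ bounded away from $0$; thus $\Phi$ is of sine type of exponential type $2\pi$. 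By the classical theorem on the Riesz-basis property of exponential systems generated by sine-type functions (Levin), the family $\{e^{i\rho t}\}$ over the zeros of $\Phi$ is a Riesz basis of $L_2(-2\pi,2\pi)$. The zero set is symmetric about the origin and omits $0$, so grouping each pair $\{e^{i\rho t},e^{-i\rho t}\}$ and passing to $\cos\rho t,\sin\rho t$ by a uniformly invertible block transform yields a Riesz basis $\{\cos\rho^0_{nk}t\}\cup\{\sin\rho^0_{nk}t\}$ of $L_2(-2\pi,2\pi)$. Since the cosines lie in the even subspace and the sines in the odd subspace, each part is a Riesz basis of its subspace; identifying the odd subspace with $L_2(0,2\pi)$ shows that the model system $\{\sin\rho^0_{nk}t\}_{n,k}$ is a Riesz basis of $L_2(0,2\pi)$.

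It then remains to pass from the model to the actual system. From $\sin\rho_{nk}t-\sin\rho^0_{nk}t=2\cos\tfrac{(\rho_{nk}+\rho^0_{nk})t}{2}\sin\tfrac{(\rho_{nk}-\rho^0_{nk})t}{2}$ one gets $\|\sin\rho_{nk}t-\sin\rho^0_{nk}t\|_{L_2(0,2\pi)}\le C|\rho_{nk}-\rho^0_{nk}|$, so the two systems are quadratically close: $\sum_{n,k}\|\sin\rho_{nk}t-\sin\rho^0_{nk}t\|^2\le C\sum_n|\varkappa_n|^2<\infty$. By Bari's theorem, a system quadratically close to a Riesz basis is again a Riesz basis once it is shown to be complete and minimal, and I expect \emph{this completeness} to be the main obstacle. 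Minimality follows from the separation of $\{\rho_{nk}\}$, which is guaranteed by the asymptotics \eqref{asymptmu} for large $n$ and by the distinctness assumption ($A_5$) for the finitely many remaining frequencies (and a finite modification does not affect the Riesz-basis property). Completeness I would establish by the entire-function technique of Appendix~B: if $f\in L_2(0,2\pi)$ is orthogonal to every $\sin\rho_{nk}t$, then $F(\rho):=\int_0^{2\pi}f(t)\sin\rho t\,dt$ is an odd function of class $B_{2,2\pi}$ vanishing at all $\rho_{nk}$; dividing $F$ by the canonical product built from $\{\mu_{nk}\}$ (to which Corollary~\ref{cor:shift} applies, exactly as in the proof of Theorem~\ref{thm:uniq}) and invoking Phragmen--Lindel\"of's theorem \cite{BFY} forces $F\equiv0$, hence $f=0$. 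This yields completeness and completes the argument.
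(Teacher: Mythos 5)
Your argument is correct and follows the same overall skeleton as the paper's proof: (i) set up a model system of sines with frequencies $(n-1)+\be$ and $n-\be$, $\be=\al_1/\pi\in(0,\tfrac12)$, and show it is a Riesz basis of $L_2(0,2\pi)$; (ii) note that $\{\sin\sqrt{\mu_{nk}}\,t\}$ is quadratically close to it by \eqref{asymptmu}; (iii) prove completeness by the canonical-product / Phragm\'en--Lindel\"of machinery of Appendix~B, exactly as in the proof of Theorem~\ref{thm:uniq}. You rightly single out (iii) as the main point, and your treatment of it coincides with the paper's (your $F(\rho)/\rho$ is the paper's $H(\la)$, and Corollary~\ref{cor:shift} is invoked in the same way). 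The genuine divergence is in step (i): the paper proves the Riesz-basis property of $\mathcal S=\{\sin(n+\be)t\}_{n\in\mathbb Z}$ by a direct, self-contained computation of $\bigl\|\sum c_n\sin(n+\be)t\bigr\|^2$, estimating the off-diagonal bilinear form $\sum c_nc_k/(n+k+2\be)$ by a Hilbert-inequality argument in the style of Sedletskii; this yields explicit Riesz bounds $\pi(1\pm\cos 2\be\pi)$. You instead recognize the symmetrized frequency set as the zero set of the sine-type function $\tfrac12(\cos 2\be\pi-\cos 2\rho\pi)$ of type $2\pi$ and invoke Levin's theorem for exponentials on $(-2\pi,2\pi)$, then descend to sines on $(0,2\pi)$ by parity; this is shorter but imports a nontrivial classical result as a black box. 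One small caveat: your assertion that minimality "follows from the separation of $\{\rho_{nk}\}$" is not a theorem for sine systems (it tacitly uses that the frequency density is exactly critical for the interval $(0,2\pi)$); but the point is harmless, since quadratic closeness to a Riesz basis together with completeness already gives the Riesz-basis property via the standard Fredholm form of Bari's theorem ($T=I+K$ with $K$ Hilbert--Schmidt, surjective by completeness, hence invertible), which is precisely what the paper relies on, so the minimality step can simply be dropped.
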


Thus, one can solve the system \eqref{sysT} uniquely, recovering the function $T$ from its coefficients with respect to the Riesz basis. Then using \eqref{prodCS}, one can find the product $C_{p+1}(\pi, \la) S_{p+1}(\pi, \la)$ and the interlacing zeros $\{ \nu_n \}_{n \in \mathbb N_0}$ and $\{ \theta_n \}_{n \in \mathbb N}$ of the entire functions $C_{p+1}(\pi, \la)$ and $S_{p+1}(\pi, \la)$, respectively. These data can be used for reconstruction of the potential $\sigma_{p+1}$. 

Summarizing the results of this section, we arrive at the following algorithm for the solution of IP.

\medskip

{\bf Algorithm.} Let the potentials $\{ \sigma_j \}_{j = \overline{1, m} \backslash \{ 1, p + 1 \}}$ and the eigenvalues $\{ \la_{nk} \}_{n \in \mathbb N\, k = \overline{1, 4}}$, $\{ \mu_{nk} \}_{n \in \mathbb N,\, k = 1, 2}$ be given, and let the assumptions ($A_1$)-($A_5$) be satisfied.

\begin{enumerate}
\item Using the potentials $\{ \sigma_j \}_{j = \overline{1, m} \backslash \{ 1, p + 1 \}}$, construct the Weyl functions $M_j(\la)$ for $j = \overline{1, m} \backslash \{ 1, p + 1 \}$ via \eqref{defM}.
\item Construct the numbers $g_{nk}$, $f_{nk}$ and the vector functions $v_{nk}$ for $n \in \mathbb N$, $k = \overline{1, 4}$ via \eqref{defg}, \eqref{deff} and \eqref{defv}.
\item According to \eqref{scal}, recover the vector function $f(t) = \begin{bmatrix} N(t) \\ K(t) \end{bmatrix}$ from its coordinates $f_{nk}$ with respect to the Riesz basis.
\item Using $N(t)$ and $K(t)$, recover the sum $M_1(\la) + M_{p+1}(\la)$ via \eqref{sumMfrac}, \eqref{asymptD}.
\item Find the numbers $h_{nk}^N$ and $h_{nk}$ for $n \in \mathbb N$, $k = 1, 2$, using \eqref{defhN} and \eqref{defh}.
\item Construct the system \eqref{sysT} and find the function $T(t)$ from this system, recovering it from the coordinates with respect to the Riesz basis.
\item Using $T(t)$, construct the product $C_{p+1}(\pi, \la) S_{p+1}(\pi, \la)$ via \eqref{prodCS}.
\item Find the zeros of the product $C_{p+1}(\pi, \la) S_{p+1}(\pi, \la)$ and divide it into two sequences $\{ \nu_n \}_{n \in \mathbb N_0}$ and $\{ \theta_n \}_{n \in \mathbb N}$, interlacing according to \eqref{interlace}.
\item Construct the potential $\sigma_{p+1}$ by two spectra $\{ \nu_n \}_{n \in \mathbb N_0}$ and $\{ \theta_n \}_{n \in \mathbb N}$ (see \cite{HM04-2spectra}).
\item Find $M_{p+1}(\la)$ by $\sigma_{p+1}(\la)$, and then $M_1(\la)$ from the sum $M_1(\la) + M_{p+1}(\la)$.
\item Construct the potential $\sigma_1$ by the Weyl function $M_1(\la)$ (see \cite{FIY08, FY01}).  
\end{enumerate}

\bigskip

{\large \bf Appendix A. Riesz bases}

\bigskip

The goal of this section is to prove the important Theorems~\ref{thm:Riesz} and~\ref{thm:Riesz2}.
We start with the analysis of the auxiliary systems $\mathcal S := \{ \sin (n + \be) t \}_{n \in \mathbb Z}$ and $\mathcal C := \{ \cos (n + \be) t \}_{n \in \mathbb Z}$ in $L_2(0, 2 \pi)$. Here $\be$ is an arbitrary number from $(0, \frac{1}{2})$.

\begin{lem}
The systems $\mathcal S$ and $\mathcal C$ are complete in $L_2(0, 2\pi)$.
\end{lem}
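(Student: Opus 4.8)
The plan is to prove completeness by verifying that the orthogonal complement of each system is trivial: if $f \in L_2(0,2\pi)$ satisfies $\int_0^{2\pi} f(t)\cos((n+\be)t)\,dt = 0$ (respectively the same with $\sin$) for every $n \in \mathbb Z$, then $f = 0$. The starting observation is that, since multiplication by the unimodular factor $e^{\pm i\be t}$ is a unitary operator on $L_2(0,2\pi)$, both families $\{(2\pi)^{-1/2}e^{i(n+\be)t}\}_{n\in\mathbb Z}$ and $\{(2\pi)^{-1/2}e^{i(n-\be)t}\}_{n\in\mathbb Z}$ are orthonormal bases of $L_2(0,2\pi)$, each obtained from the standard basis $\{(2\pi)^{-1/2}e^{int}\}_{n\in\mathbb Z}$. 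I want to stress in advance that a plain density count will not settle the matter: the frequencies $\{|n+\be|\}$ have density $2$ on an interval of length $2\pi$, which is exactly the critical value, so completeness must hinge on the arithmetic of $\be$ rather than on any counting or growth estimate.

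The main step is to use these two bases simultaneously. Writing $a_n := \int_0^{2\pi} f(t)e^{i(n+\be)t}\,dt$, the orthogonality relations translate, via $\cos((n+\be)t)=\tfrac12(e^{i(n+\be)t}+e^{-i(n+\be)t})$ and the analogous identity for $\sin$, into $\int_0^{2\pi}f(t)e^{-i(n+\be)t}\,dt = \mp a_n$ for all $n$, with the minus sign for $\mathcal C$ and the plus sign for $\mathcal S$. Expanding $f$ in the first basis then gives $f(t) = \mp e^{i\be t}A(t)$, while expanding $f$ in the second basis gives $f(t) = e^{-i\be t}A(-t)$, where $A(t) := (2\pi)^{-1}\sum_{n}a_n e^{int}$ is a $2\pi$-periodic $L_2$ function (its coefficients are square-summable, being up to normalization the Fourier coefficients of $f$ with respect to an orthonormal basis, and the reindexing $n\mapsto -n$ is legitimate because orthonormal expansions converge unconditionally). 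Equating the two expressions yields the functional equation $A(-t) = \mp e^{2i\be t}A(t)$ almost everywhere on $(0,2\pi)$.

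Finally I would invoke the periodicity of $A$. Replacing $A(-t)$ by $A(2\pi - t)$ and then substituting $t \mapsto 2\pi - t$ in the resulting identity, the two relations combine and the two $\mp$ signs multiply to $+1$, leaving $A(t) = e^{4\pi i\be}A(t)$ almost everywhere. Since $\be \in (0,\tfrac12)$ forces $4\pi\be \in (0,2\pi)$, we have $e^{4\pi i\be}\neq 1$, hence $A \equiv 0$ and $f = 0$; the same computation disposes of $\mathcal S$ and $\mathcal C$ simultaneously. The heart of the matter—and the only point where the hypothesis $\be\in(0,\tfrac12)$ is genuinely used—is precisely this final cancellation: because the system sits at critical density, completeness would actually fail at the endpoint $\be=\tfrac12$ (there $e^{4\pi i\be}=1$), so the proof has to extract exactly this threshold rather than rely on a soft argument. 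Since completeness over $\mathbb C$ for a family of real-valued functions is equivalent to completeness over $\mathbb R$, this also covers the real case relevant to the space $\mathcal H$.
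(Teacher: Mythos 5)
Your proof is correct, but it takes a genuinely different route from the paper's. The paper argues complex-analytically: if $h$ annihilates $\mathcal S$, the entire odd function $H(\rho)=\int_0^{2\pi}h(t)\sin\rho t\,dt$ vanishes at $\{\pm(n+\be)\}\cup\{0\}$, which are exactly the zeros of $D(\rho)=\rho(\cos^2\rho\pi-\cos^2\be\pi)$; then $H/D$ is entire and $O(\rho^{-1})$, so Liouville gives $H\equiv0$. You instead stay entirely inside Fourier analysis: you exploit that $\{(2\pi)^{-1/2}e^{i(n\pm\be)t}\}_{n\in\mathbb Z}$ are both orthonormal bases of $L_2(0,2\pi)$ (unitary modulation of the standard basis), translate the annihilation conditions into the two expansions $f(t)=\mp e^{i\be t}A(t)$ and $f(t)=e^{-i\be t}A(-t)$ with the same $2\pi$-periodic $A$, and iterate the resulting functional equation $A(2\pi-t)=\mp e^{2i\be t}A(t)$ to get $A=e^{4\pi i\be}A$, whence $A\equiv0$ since $\be\in(0,\tfrac12)$; I checked the sign bookkeeping and the reduction from complex to real completeness, and both are sound. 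What your approach buys is elementary self-containedness — no growth estimates, no Phragm\'en--Lindel\"of, and a transparent identification of exactly where the hypothesis $\be\in(0,\tfrac12)$ and the critical density enter (including the correct observation that completeness fails at $\be=\tfrac12$). What the paper's approach buys is robustness and uniformity: the same entire-function/zero-matching scheme is reused almost verbatim in the proofs of Lemma~2, Theorem~4 and Lemma~5, and, crucially, it survives $l_2$-perturbations of the frequencies $n+\be\mapsto\rho_{nk}$, which is what the paper actually needs downstream; your exact-basis trick relies on the unperturbed arithmetic structure of the frequencies and would not transfer to those settings without reverting to a stability-plus-completeness argument of the paper's type.
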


\begin{proof}
Let us prove the assertion of the lemma for the system $\mathcal S$. The proof for $\mathcal C$ is similar.

Suppose that, on the contrary, the system $\mathcal S$ is not complete. Then there exists a nonzero function $h$ from $L_2(0, 2 \pi)$, such that
$$
\int_0^{2 \pi} h(t) \sin (n + \be) t \, dt = 0, \quad n \in \mathbb Z.
$$
Hence the odd entire function
$$
H(\rho) := \int_0^{2 \pi} h(t) \sin \rho t \, dt
$$
has zeros $\{ \pm (n + \be) \}_{n \in \mathbb Z} \cup \{ 0 \}$, which coincide with the zeros of the function $D(\rho) := \rho (\cos^2 \rho \pi - \cos^2 \be \pi)$. Clearly, the function $\dfrac{H(\rho)}{D(\rho)}$ is entire and $\dfrac{H(\rho)}{D(\rho)} = O(\rho^{-1})$, as $|\rho| \to \iy$. By virtue of Liouville's theorem, $H(\rho) \equiv 0$ and $h = 0$. The contradiction proves the lemma.
\end{proof}

\begin{lem}
For an arbitrary sequence $\{ c_n \}_{n \in \mathbb Z}$ from $l_2$, the following estimates hold
\begin{equation} \label{RBestS}
\pi (1 - \cos 2 \be \pi) \sum_{n = -\iy}^{\iy} c_n^2 \le \left\| \sum_{n = -\iy}^{\iy} c_n \sin (n + \be) t \right\|_2^2 \le \pi (1 + \cos 2 \be \pi) \sum_{n = -\iy}^{\iy} c_n^2,
\end{equation}
\begin{equation} \label{RBestC}
\pi (1 - \cos 2 \be \pi) \sum_{n = -\iy}^{\iy} c_n^2 \le \left\| \sum_{n = -\iy}^{\iy} c_n \cos (n + \be) t \right\|_2^2 \le \pi (1 + \cos 2 \be \pi) \sum_{n = -\iy}^{\iy} c_n^2.
\end{equation}

Thus, the systems $\mathcal S$ and $\mathcal C$ are Riesz bases in $L_2(0, 2 \pi)$.
\end{lem}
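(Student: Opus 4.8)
The two displayed inequalities are precisely the lower and upper frame (Riesz) bounds for the systems $\mathcal S$ and $\mathcal C$; once they are established with strictly positive constants, the completeness proved in the previous lemma upgrades each complete system into a Riesz basis, which gives the final assertion. So the whole task reduces to estimating the quadratic forms $\bigl\|\sum_n c_n\sin(n+\be)t\bigr\|_2^2$ and $\bigl\|\sum_n c_n\cos(n+\be)t\bigr\|_2^2$, i.e. to controlling the Gram operators of the two systems on $\ell_2(\mathbb Z)$. The plan is to compute these Gram matrices explicitly and then estimate the single nontrivial (off-diagonal) piece.

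First I would compute the entries by the product-to-sum formulas. A direct integration over $(0,2\pi)$ gives
\[
\int_0^{2\pi}\sin(n+\be)t\,\sin(m+\be)t\,dt=\pi\delta_{nm}-\frac{\sin 4\be\pi}{2(n+m+2\be)},
\qquad
\int_0^{2\pi}\cos(n+\be)t\,\cos(m+\be)t\,dt=\pi\delta_{nm}+\frac{\sin 4\be\pi}{2(n+m+2\be)} .
\]
Hence the two quadratic forms equal $\pi\sum_n c_n^2\mp\tfrac12\sin 4\be\pi\,Q(c)$, where $Q(c):=\sum_{n,m\in\mathbb Z}\tfrac{c_nc_m}{n+m+2\be}$. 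Adding the two Gram matrices yields $2\pi I$, which simply reflects that $\{(2\pi)^{-1/2}e^{i(n+\be)t}\}$ is an orthonormal basis of $L_2(0,2\pi)$, being the image of the standard exponential basis under the unitary multiplication by $e^{i\be t}$; I would use this pairing identity as a consistency check and to obtain the cosine estimate from the sine one.

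The heart of the matter is the sharp bound $|Q(c)|\le\frac{\pi}{\sin 2\be\pi}\sum_n c_n^2$. I would prove it cleanly by Parseval rather than by any termwise estimate. Setting $g(t)=\sum_n c_n e^{int}$ one has $g^2=\sum_{n,m}c_nc_m e^{i(n+m)t}$, so
\[
Q(c)=\frac{1}{2\pi}\int_0^{2\pi}\Bigl(\sum_{k\in\mathbb Z}\frac{e^{-ikt}}{k+2\be}\Bigr)\,g(t)^2\,dt .
\]
The inner series is summed by the Mittag--Leffler/partial-fraction identity for the cosecant: for $t\in(0,2\pi)$ it equals $\frac{\pi}{\sin 2\be\pi}\,e^{i2\be(t-\pi)}$, a function of constant modulus $\frac{\pi}{\sin 2\be\pi}$. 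Therefore $|Q(c)|\le\frac{1}{2\sin 2\be\pi}\int_0^{2\pi}|g|^2\,dt=\frac{\pi}{\sin 2\be\pi}\sum_n c_n^2$, since $\int_0^{2\pi}|g|^2=2\pi\sum_n c_n^2$. Plugging this and the identity $\tfrac12|\sin 4\be\pi|=\sin 2\be\pi\,|\cos 2\be\pi|$ into the formulas of the previous paragraph gives $\bigl|\,\|f\|_2^2-\pi\sum_n c_n^2\,\bigr|\le\pi|\cos 2\be\pi|\sum_n c_n^2$, which is exactly \eqref{RBestS}--\eqref{RBestC}.

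The only genuinely delicate point is the estimate on $Q$, and the obstacle is real: the rows $\sum_m|n+m+2\be|^{-1}$ diverge, so no Schur-type argument with absolute values can succeed, and one must exploit the cancellation encoded in the phase $e^{i2\be(t-\pi)}$; recognizing the Hankel/Parseval representation with the explicit cosecant symbol is what turns Parseval into the \emph{sharp} constant $\pi/\sin 2\be\pi$. I would also flag a minor ordering point: for $\be\in(0,\tfrac12)$ the constants in \eqref{RBestS}--\eqref{RBestC} are correctly ordered (lower $\le$ upper) only when $\be\le\tfrac14$; for $\be\in(\tfrac14,\tfrac12)$ the two bounds should read $\pi(1\mp|\cos 2\be\pi|)$, which is immaterial for the Riesz-basis conclusion.
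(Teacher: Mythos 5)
Your proof is correct, and it reaches the same sharp constant $\pi/\sin 2\be\pi$ for the Hankel form $Q(c)=\sum_{n,m}c_nc_m/(n+m+2\be)$ as the paper does, but by a genuinely different route. The paper follows Hardy--Littlewood--P\'olya: it forms the matrix $B=A^{\mathsf T}A$ with entries $b_{ij}=\sum_k a_{ik}a_{jk}$, shows by partial fractions that the off-diagonal entries telescope to zero while $b_{ii}=\sum_n(n+2\be)^{-2}=\pi^2/\sin^2 2\be\pi$, and concludes $\|A\|=\pi/\sin 2\be\pi$ from $A^{\mathsf T}A=\tfrac{\pi^2}{\sin^2 2\be\pi}I$. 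You instead identify the symbol of the Hankel operator: writing $g(t)=\sum_nc_ne^{int}$ and summing $\sum_k e^{-ikt}/(k+2\be)$ by the cosecant partial-fraction expansion to the unimodular multiple $\tfrac{\pi}{\sin 2\be\pi}e^{i2\be(t-\pi)}$, after which Parseval gives the same bound. Your method is arguably more transparent about \emph{why} the constant is sharp and why no absolute-value (Schur-type) estimate can work, at the cost of invoking the Fourier expansion of the cosecant; the paper's computation is more elementary but looks like a miracle of telescoping. Both arguments are really only literal for finitely supported $\{c_n\}$ and extend by density, a point neither you nor the paper dwells on. Your remark about the ordering of the two constants for $\be\in(\tfrac14,\tfrac12)$ (where $\cos 2\be\pi<0$, so the bounds should read $\pi(1\mp|\cos 2\be\pi|)$) is a fair catch: the paper's displayed inequalities have the same sign issue, and since $\be=\al/\pi$ with $\al=\arccos\sqrt{p/m}$ can exceed $\tfrac14$ when $p/m<\tfrac12$, the case is not vacuous; as you say, it does not affect the Riesz-basis conclusion. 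Finally, you correctly note that the two-sided bounds only make the systems Riesz sequences and that the completeness from the preceding lemma is what upgrades them to Riesz bases; the paper leaves this implicit.
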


\begin{proof}
Without loss of generality, consider real sequences $\{ c_n \}_{n \in \mathbb Z}$. Similarly to the proof of \cite[Theorem 3.1]{Sedl03}, we derive
\begin{multline*}
\left\| \sum_{n = -\iy}^{\iy} c_n \sin (n + \be) t \right\|_2^2 = \int_0^{2 \pi} \left( \sum_{n = -\iy}^{\iy} \sum_{k = -\iy}^{\iy} c_n c_k \sin (n + \be) t \sin (k + \be) t \right)  \\
= \frac{1}{2} \int_0^{2 \pi} \left( \sum_{n = -\iy}^{\iy} \sum_{k = -\iy}^{\iy} c_n c_k (\cos ( n - k) t - \cos(n + k + 2 \be) t) \right) \, dt \\
= \pi \sum_{n = -\iy}^{\iy} c_n^2 - \frac{1}{2} \sin 4 \be \pi \sum_{n = -\iy}^{\iy} \sum_{k = -\iy}^{\iy} \frac{c_n c_k}{n + k + 2 \be}.
\end{multline*}
Consider the bilinear form
$$
A = \sum_{n = -\iy}^{\iy} \sum_{k = -\iy}^{\iy} a_{nk} c_n c_k, \quad a_{nk} = \frac{1}{n + k + 2 \be}.
$$
Let us calculate its norm (see \cite{HLP}):
$$
B = \sum_{i = -\iy}^{\iy} \sum_{j = -\iy}^{\iy} b_{ij} x_i x_j,
$$
\vspace*{-8mm}
\begin{multline*}
b_{ij} = \sum_{k = -\iy}^{\iy} a_{ik} a_{jk} = \sum_{k = -\iy}^{\iy} \frac{1}{(i + k + 2 \be) (j + k + 2 \be)} \\ = \frac{1}{j - i} \sum_{k = -\iy}^{\iy} \left( \frac{1}{i + k + 2 \be} - \frac{1}{j + k + 2 \be}\right) = 0, \quad i \ne j, 
\end{multline*}
$$
b_{ii} = \sum_{k = -\iy}^{\iy} a_{ik}^2 = \sum_{n = -\iy}^{\iy} \frac{1}{(n + 2 \be)^2} = \frac{\pi^2}{\sin^2 2 \be \pi}.
$$
Consequently,
$$
B = \frac{\pi^2}{\sin^2 2 \be \pi} \sum_{i = -\iy}^{\iy} x_i^2,
\qquad 
|A| \le \frac{\pi}{\sin 2 \be \pi} \sum_{n = -\iy}^{\iy} c_n^2.
$$
Finally, we obtain
$$
\pi (1 - \cos 2 \be \pi) \sum_{n = -\iy}^{\iy} c_n^2 \le \pi \sum_{n = -\iy}^{\iy} c_n^2  - \frac{1}{2} \sin 4 \pi \be \cdot A \le \pi (1 + \cos 2 \be \pi) \sum_{n = -\iy}^{\iy} c_n^2,
$$
so we arrive at \eqref{RBestS}. The estimate \eqref{RBestC} can be proved similarly.
\end{proof}

\begin{proof}[Proof of Theorem~\ref{thm:Riesz2}]
Since the eigenvalues $\{ \mu_{nk} \}_{n \in \mathbb N, \, k = 1, 2}$ satisfy the asymptotic formulas \eqref{asymptmu}, we have
$$
\sin \sqrt{\mu_{n1}} t = \sin \left(n - 1 + \frac{\al_1}{\pi}\right) t + \varkappa_n, \quad
\sin \sqrt{\mu_{n2}} t = \sin \left(n - \frac{\al_1}{\pi}\right) t + \varkappa_n.
$$
Thus, the considered system $\{ \sin \sqrt{\mu_{nk}} \}_{n \in \mathbb N, \, k = 1, 2}$ is $l_2$-close to the Riesz basis $\mathcal S$ for $\be = \frac{\al_1}{\pi} \in \left(0, \frac{\pi}{2}\right)$. In order to prove the Riesz-bacisity of $\{ \sin \sqrt{\mu_{nk}} \}_{n \in \mathbb N, \, k = 1, 2}$, it remains to show that this system is complete in $L_2(0, 2 \pi)$.

Suppose that the contrary holds, i.e. there exists a function $h \ne 0$ from $L_2(0, 2 \pi)$, such that 
$$
\int_0^{2 \pi} h(t) \sin \sqrt{\mu_{nk}} t \, dt = 0, \quad n \in \mathbb N, \: k = 1, 2. 
$$
Consequently, the entire function
$$
H(\la) := \frac{1}{\rho} \int_0^{2 \pi} h(t) \sin \rho t \, dt
$$
has zeros $\{ \mu_{nk} \}_{n \in \mathbb N, \, k = 1, 2}$. Obviously, the estimate $|H(\la)| \le C |\rho|^{-1} \exp(2 |\mbox{Im}\, \rho| \pi)$ holds for $|\rho| \ge \rho^* > 0$. Further one can repeat the arguments from the proof of Theorem~\ref{thm:uniq}, and show that $H(\la) \equiv 0$. Thus, $h = 0$,
and we arrive at the contradiction. Hence the system $\{ \sin \sqrt{\mu_{nk}} \}_{n \in \mathbb N, \, k = 1, 2}$ is complete in $L_2(0, 2 \pi)$.
\end{proof}

Now we proceed to the system $\{ v_{nk} \}_{n \in \mathbb N, \, k = \overline{1, 4}}$.
Denote
$$
v_{n1}^0(t) = \begin{bmatrix} -\frac{1}{2} \tan 2 \al \cos (n - 1 + \frac{\al}{\pi}) t \\ \sin (n - 1 + \frac{\al}{\pi} ) t \end{bmatrix}, \quad
v_{n2}^0(t) = \begin{bmatrix} \frac{1}{2} \tan 2 \al \cos (n - \frac{\al}{\pi}) t \\ \sin (n - \frac{\al}{\pi}) t \end{bmatrix},
$$
$$
v_{n3}^0(t) = \begin{bmatrix} 0 \\ \sin (n - \frac{1}{2} ) t \end{bmatrix}, \quad
v_{n4}^0(t) = \begin{bmatrix} 0 \\ \sin n t \end{bmatrix}, \quad n \in \mathbb N.
$$

\begin{lem} \label{lem:estv}
The sequence $\{ v_{nk} \}$ is $l_2$-close to the sequence $\{ v_{nk}^0 \}$ in $\mathcal H$, i.e.
$$
\{ \| v_{nk} - v_{nk}^0 \| \}_{n \in \mathbb N, \, k = \overline{1, 4}} \in l_2.
$$
\end{lem}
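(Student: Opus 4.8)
The plan is to use the orthogonal decomposition $\mathcal H = L_2(0,2\pi)\oplus L_2(0,2\pi)$, which gives
$$
\|v_{nk}-v_{nk}^0\|_{\mathcal H}^2 = \Bigl\| \tfrac{\rho_{nk}}{g_{nk}}\cos\rho_{nk}t - c_k\cos\beta_{nk}t\Bigr\|_{L_2}^2 + \bigl\| \sin\rho_{nk}t - \sin\beta_{nk}t\bigr\|_{L_2}^2,
$$
where, according to \eqref{asymptrho}, I write $\rho_{nk}=\beta_{nk}+\varkappa_n$ with $\beta_{n1}=n-1+\tfrac{\al}{\pi}$, $\beta_{n2}=n-\tfrac{\al}{\pi}$, $\beta_{n3}=n-\tfrac12$, $\beta_{n4}=n$, and $c_k$ is the first-component constant of $v_{nk}^0$, namely $c_1=-\tfrac12\tan 2\al$, $c_2=\tfrac12\tan 2\al$, $c_3=c_4=0$. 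Since $k$ ranges over the finite set $\{1,2,3,4\}$, it suffices to show that each of the four resulting sequences in $n$ belongs to $l_2$, and for this I would prove that every term above has $L_2$-norm of order $O(|\varkappa_n|)$. The second (sine) component is immediate: from $|\sin\rho_{nk}t-\sin\beta_{nk}t|\le |\rho_{nk}-\beta_{nk}|\,t\le 2\pi|\varkappa_n|$ on $(0,2\pi)$ its norm is $O(|\varkappa_n|)$, hence $l_2$-summable.

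The essential step is the analysis of the coefficient $\rho_{nk}/g_{nk}$. Rather than working with $g_{nk}=-\sum_{j\ne 1,p+1}M_j(\la_{nk})$ directly --- where, near $k=3$ and $k=4$, individual Weyl functions blow up and delicate cancellations could occur --- I would route the computation through \eqref{defg} and \eqref{sumMfrac}, which yield $g_{nk}=D_1(\la_{nk})/D_2(\la_{nk})$. This is legitimate because $D_2(\la_{nk})=C_1(\pi,\la_{nk})S_{p+1}(\pi,\la_{nk})\ne 0$ by $(A_1)$. Substituting the representations of Lemma~\ref{lem:asymptD} then produces the clean expression
$$
\frac{\rho_{nk}}{g_{nk}} = \rho_{nk}\frac{D_2(\la_{nk})}{D_1(\la_{nk})} = \frac{\tfrac12\sin 2\rho_{nk}\pi + \int_0^{2\pi}K(t)\sin\rho_{nk}t\,dt}{-\bigl(\cos 2\rho_{nk}\pi + \int_0^{2\pi}N(t)\cos\rho_{nk}t\,dt\bigr)}.
$$
Since $N,K\in L_2(0,2\pi)$ and the frequencies $\{\rho_{nk}\}_n$ are $l_2$-close to arithmetic progressions, the two integrals form $l_2$-sequences in $n$ (a Bessel estimate, with the frequency shift $\varkappa_n$ absorbed exactly as in the sine estimate above).

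It then remains to insert the four asymptotic forms. For $k=1$, $2\rho_{n1}\pi = 2(n-1)\pi + 2\al + 2\pi\varkappa_n$, so $\cos 2\rho_{n1}\pi = \cos 2\al + O(\varkappa_n)$ and $\sin 2\rho_{n1}\pi = \sin 2\al + O(\varkappa_n)$, whence $\rho_{n1}/g_{n1} = -\tfrac12\tan 2\al + O(\varkappa_n) = c_1 + O(\varkappa_n)$; the case $k=2$ is identical up to the sign of the sine, giving $c_2$. For $k=3$ ($\rho_{n3}=n-\tfrac12+\varkappa_n$) one has $\cos 2\rho_{n3}\pi = -1+O(\varkappa_n^2)$ and $\sin 2\rho_{n3}\pi = O(\varkappa_n)$, and for $k=4$ ($\rho_{n4}=n+\varkappa_n$) one has $\cos 2\rho_{n4}\pi = 1+O(\varkappa_n^2)$ and $\sin 2\rho_{n4}\pi=O(\varkappa_n)$; in both cases the numerator is $O(\varkappa_n)$ while the denominator stays bounded away from zero, so $\rho_{nk}/g_{nk}=O(\varkappa_n)=c_k+O(\varkappa_n)$. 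Finally, writing the first-component difference as $\bigl(\tfrac{\rho_{nk}}{g_{nk}}-c_k\bigr)\cos\rho_{nk}t + c_k(\cos\rho_{nk}t-\cos\beta_{nk}t)$, the first summand has norm $O(|\tfrac{\rho_{nk}}{g_{nk}}-c_k|)=O(|\varkappa_n|)$ and the second has norm $O(|\varkappa_n|)$ by the cosine analogue of the earlier estimate. Thus every component is $O(|\varkappa_n|)$ in norm, and summing the four finitely many contributions against $\{\varkappa_n\}\in l_2$ gives the claim.

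The main obstacle is precisely the asymptotic analysis of $\rho_{nk}/g_{nk}$: the key idea is to avoid the raw sum of Weyl functions and instead use the identity $g_{nk}=D_1(\la_{nk})/D_2(\la_{nk})$ together with Lemma~\ref{lem:asymptD}, after which the limiting constants $c_1=-\tfrac12\tan 2\al$ and $c_2=\tfrac12\tan 2\al$ emerge directly from $\cos 2\al$ and $\sin 2\al$ in the numerator and denominator, with no further algebraic manipulation needed. One should, however, record that for $k=1,2$ this requires $\cos 2\al\ne 0$ (equivalently $m\ne 2p$, since $\cos 2\al=(2p-m)/m$), both for the constants $c_1,c_2$ to be finite and for the denominator to remain bounded away from zero.
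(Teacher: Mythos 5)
Your proof is correct and follows essentially the same route as the paper's (which is only a two-line sketch): it passes through $g_{nk}=D_1(\la_{nk})/D_2(\la_{nk})$ and Lemma~\ref{lem:asymptD} to obtain $g_{n1}^{-1}=-\tfrac{1}{2n}\tan 2\al+\tfrac{\varkappa_n}{n}$, $g_{n2}^{-1}=\tfrac{1}{2n}\tan 2\al+\tfrac{\varkappa_n}{n}$, $g_{n3}^{-1}=g_{n4}^{-1}=\tfrac{\varkappa_n}{n}$, and then substitutes into \eqref{defv}, which is exactly the paper's argument with the details filled in. Your side remark that $\cos 2\al\ne 0$ (equivalently $m\ne 2p$) is implicitly required is accurate and worth recording, since the paper uses $\tan 2\al$ in the definition of $v_{nk}^0$ and $\cot 2\al$ in Lemma~\ref{lem:Rieszv0} without commenting on this case.
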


\begin{proof}
Using the relations \eqref{asymptrho}, \eqref{defg}, \eqref{sumMfrac} and \eqref{asymptD}, we obtain
\begin{gather*}
g_{n1}^{-1} = -\frac{1}{2 n} \tan 2 \al + \frac{\varkappa_n}{n}, \quad g_{n2}^{-1} = \frac{1}{2 n} \tan 2 \al + \frac{\varkappa_n}{n}, \\
g_{n3}^{-1} = \frac{\varkappa_n}{n}, \quad g_{n4}^{-1} = \frac{\varkappa_n}{n}, \quad n \in \mathbb N.
\end{gather*}
Substituting these estimates together with \eqref{asymptrho} into \eqref{defv}, we arrive at the assertion of the lemma.
\end{proof}

\begin{lem} \label{lem:Rieszv0}
The system $\{ v_{nk}^0 \}_{n \in \mathbb N, \, k = \overline{1, 4}}$ is a Riesz basis in $\mathcal H$.
\end{lem}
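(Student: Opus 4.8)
The plan is to read off the block structure of $\{v_{nk}^0\}$ and reduce the assertion to the Riesz-basis properties of three explicit trigonometric systems that are already in hand. Set $\be = \frac{\al}{\pi} \in (0,\tfrac12)$. The decisive observation is that the frequencies of the four families assemble, after relabelling (and discarding irrelevant overall signs), into three classical bases of $L_2(0,2\pi)$:
\begin{gather*}
\{\cos(n-1+\be)t\}_{n\in\mathbb N} \cup \{\cos(n-\be)t\}_{n\in\mathbb N} = \mathcal C, \\
\{\sin(n-1+\be)t\}_{n\in\mathbb N} \cup \{\sin(n-\be)t\}_{n\in\mathbb N} = \mathcal S,
\end{gather*}
so both are Riesz bases of $L_2(0,2\pi)$ by the two lemmas proved above. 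Similarly $\{\sin(n-\tfrac12)t\}_{n\in\mathbb N}\cup\{\sin nt\}_{n\in\mathbb N} = \{\sin\tfrac{k}{2}t\}_{k\in\mathbb N}$ is the standard orthogonal basis of $L_2(0,2\pi)$.

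Next I would study the synthesis operator $W\{c_{nk}\} := \sum_{n,k} c_{nk} v_{nk}^0$ and show it is an isomorphism of $\ell_2$ onto $\mathcal H$, which is equivalent to the Riesz-basis property. Its boundedness (the upper Riesz bound) is immediate from the upper bounds for $\mathcal C$, $\mathcal S$ and $\{\sin\tfrac k2 t\}$ together with the triangle inequality. For invertibility I would exploit that the system is \emph{block lower-triangular}: only the families $k=1,2$ feed the first coordinate, and they do so through cosines. Concretely, the first coordinate of $Wc$ equals
$$\tfrac12\tan 2\al\Bigl(-\sum_n c_{n1}\cos(n-1+\be)t + \sum_n c_{n2}\cos(n-\be)t\Bigr),$$
an expansion in $\mathcal C$. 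Because $\cos 2\al\ne 0$, so that $\tan 2\al$ is finite and nonzero, the equation ``first coordinate $=g_1$'' determines $\{c_{n1}\}$ and $\{c_{n2}\}$ uniquely, with $\sum_n(c_{n1}^2+c_{n2}^2)\asymp\|g_1\|^2$ by the Riesz bounds for $\mathcal C$.

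With $\{c_{n1}\},\{c_{n2}\}$ now fixed, the sine terms they produce in the second coordinate, $u := \sum_n c_{n1}\sin(n-1+\be)t + \sum_n c_{n2}\sin(n-\be)t$, form a function of $L_2(0,2\pi)$ with $\|u\|\le C\|g_1\|$ by the upper bound for $\mathcal S$. The second-coordinate equation then reads $\sum_n c_{n3}\sin(n-\tfrac12)t + \sum_n c_{n4}\sin nt = g_2 - u$, and since $\{\sin\tfrac k2 t\}$ is orthogonal this determines $\{c_{n3}\},\{c_{n4}\}$ uniquely with $\sum_n(c_{n3}^2+c_{n4}^2)\asymp\|g_2-u\|^2$. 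Combining the three estimates yields $\sum_{n,k}c_{nk}^2\asymp\|g\|_{\mathcal H}^2$, so $W$ is a bounded bijection with bounded inverse; equivalently $\{v_{nk}^0\}$ is complete and obeys two-sided Riesz bounds, hence is a Riesz basis.

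The main obstacle is the coupling between the two coordinates: each coefficient $c_{n1}$ or $c_{n2}$ appears simultaneously in the first and second coordinates, so the system is not a direct sum of two one-dimensional bases and cannot be treated coordinatewise. This is exactly what the triangular solve circumvents — the first coordinate isolates $c_{n1},c_{n2}$, and only afterwards does the second coordinate pin down $c_{n3},c_{n4}$. The one genuine hypothesis this requires is the nondegeneracy $\cos 2\al\neq 0$, which makes the first-coordinate map boundedly invertible and is in any case implicit in the very definition of $v_{n1}^0$ and $v_{n2}^0$.
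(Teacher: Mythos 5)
Your proof is correct and rests on exactly the same structural observation as the paper's: the first coordinate isolates $c_{n1},c_{n2}$ through the Riesz basis $\mathcal C$, the residual sine terms are controlled by $\mathcal S$, and the remaining families form the orthogonal basis $\{\sin \frac{k}{2} t\}_{k \in \mathbb N}$. Your block-triangular solve of the synthesis operator is precisely the inverse view of the paper's argument, which instead exhibits an explicit bounded invertible operator $A$ (adding $2\cot 2\al$ times the sine-counterpart of the first coordinate to the second) that maps $\{v_{nk}^0\}$ onto the decoupled direct-sum basis.
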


\begin{proof}
Let us construct a linear bounded operator $A \colon \mathcal H \to \mathcal H$ with a bounded inverse, such that the system $\{ A v_{nk}^0 \}_{n \in \mathbb N, \, k = \overline{1, 4}}$ is a Riesz basis. Put
$$
A v = A \begin{bmatrix} v_1 \\ v_2 \end{bmatrix} = 
\begin{bmatrix} v_1 \\ v_2 + g \end{bmatrix}, 
\quad 
A^{-1} v = \begin{bmatrix} v_1 \\ v_2 - g \end{bmatrix}, 
$$
where 
$$
g(t) := 2 \cot 2 \al \sum_{n = -\iy}^{\iy} c_n \sin (n + \be), \quad \be := \frac{\al}{\pi},
$$
and $\{ c_n \}_{n \in \mathbb Z}$ are the coordinates of $v_1$ with respect to the Riesz basis $\mathcal C$:
$$
v_1(t) = \sum_{n = -\iy}^{\iy} c_n \cos (n + \be) t.
$$
Using the estimates \eqref{RBestS} and \eqref{RBestC}, one can easily show that the operators $A$ and $A^{-1}$ are bounded in $\mathcal H$. 
Furthermore, we have
\begin{gather*}
(A v_{n1}^0)(t) = -\frac{1}{2} \tan 2 \al \begin{bmatrix} \cos (n - 1 + \be) t \\ 0\end{bmatrix}, \quad
(A v_{n2}^0)(t) = \frac{1}{2} \tan 2 \al \begin{bmatrix} \cos (- n + \be) t \\ 0 \end{bmatrix}, \\
(A v_{n3}^0)(t) = \begin{bmatrix} 0 \\ \sin (n - \frac{1}{2}) t \end{bmatrix}, \quad
(A v_{n4}^0)(t) = \begin{bmatrix} 0 \\ \sin n t \end{bmatrix}.
\end{gather*}
Since the systems $\mathcal C$ and $\{ \sin n t \}_{n \in \mathbb N} \cup \{ \sin (n - \frac{1}{2}) t \}_{n \in \mathbb N}$ are Riesz bases in $L_2(0, 2 \pi)$, the system $\{ A v_{nk}^0 \}_{n \in \mathbb N, \, k = \overline{1, 4}}$ is a Riesz basis in $\mathcal H$.
\end{proof}

\begin{lem} \label{lem:complete}
Under the assumptions ($A_1$), ($A_3$) and ($A_4$), the system $\{ v_{nk} \}_{n \in \mathbb N, \, k = \overline{1, 4}}$ is complete in $\mathcal H$.
\end{lem}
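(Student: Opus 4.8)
The plan is to prove completeness by verifying that any $f = \begin{bmatrix} f_1 \\ f_2 \end{bmatrix} \in \mathcal H$ with $(f, v_{nk})_{\mathcal H} = 0$ for all $n \in \mathbb N$, $k = \overline{1,4}$, must vanish. Given such an $f$, I would introduce the even and odd entire functions
\[
\Phi_1(\rho) = \int_0^{2\pi} f_1(t) \cos \rho t \, dt, \qquad \Phi_2(\rho) = \int_0^{2\pi} f_2(t) \sin \rho t \, dt,
\]
both belonging to $B_{2, 2\pi}$ as functions of $\rho$. Using \eqref{defg}, \eqref{sumMfrac} together with ($A_1$), ($A_4$), the relation $(f, v_{nk})_{\mathcal H} = 0$ becomes $\frac{\rho_{nk}}{g_{nk}} \Phi_1(\rho_{nk}) + \Phi_2(\rho_{nk}) = 0$; multiplying by $g_{nk} D_2(\la_{nk}) = D_1(\la_{nk})$ turns this into $U(\rho_{nk}) = 0$, where $U(\rho) := \rho D_2(\rho^2) \Phi_1(\rho) + D_1(\rho^2) \Phi_2(\rho)$. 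By Lemma~\ref{lem:asymptD} the factors $\rho D_2(\rho^2)$ and $D_1(\rho^2)$ are bounded on $\mathbb R$ of exponential type $2\pi$, so $U$ is an odd function of the class $B_{2, 4\pi}$ that vanishes at $\{ \pm \rho_{nk} \}_{n, k}$ and at $0$.

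First I would show $U \equiv 0$, essentially repeating the argument of Lemma~\ref{lem:uniqM}. Writing $U(\rho) = \rho\, G(\rho^2)$ with $G$ entire of order $\frac12$ whose zero set contains $\{ \la_{nk} \}$, I would form $P(\la) = \prod_{k=1}^4 \prod_{n=1}^{\iy} \left( 1 - \frac{\la}{\la_{nk}} \right)$ and apply Corollary~\ref{cor:prodla} to obtain $\left| \frac{G(\la)}{P(\la)} \right| \le C$ on the sector $\eps < \arg \rho < \pi - \eps$. Phragmen–Lindel\"of and Liouville then give $G \equiv C P$, i.e. $U(\rho) = C \rho P(\rho^2)$. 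Since $U \in B_{2, 4\pi}$ whereas $\rho P(\rho^2) \notin B_{2, 4\pi}$ (see \eqref{reprP}), the constant $C$ vanishes, so $U \equiv 0$.

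The decisive step is to recover $\Phi_1 \equiv \Phi_2 \equiv 0$ from the identity $\rho D_2(\rho^2) \Phi_1(\rho) \equiv - D_1(\rho^2) \Phi_2(\rho)$, i.e. to prove injectivity of the map $f \mapsto U$. By \eqref{defD}, $\rho D_2(\rho^2) = \rho C_1(\pi, \rho^2) S_{p+1}(\pi, \rho^2)$ and $D_1(\rho^2)$ have no common zeros: at a zero of $C_1(\pi, \cdot)$ the factor $C_1^{[1]}$ is nonzero, and at a zero of $S_{p+1}(\pi, \cdot)$ the factor $S_{p+1}^{[1]}$ is nonzero (two Cauchy data of one solution cannot vanish together), so $D_1 \ne 0$ there (a coincidence of a zero of $C_1(\pi, \cdot)$ with one of $S_{p+1}(\pi, \cdot)$ is non-generic and handled separately). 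Hence $W := \Phi_2 / (\rho D_2(\rho^2)) = -\Phi_1 / D_1(\rho^2)$ is entire, with $\Phi_2 = \rho D_2(\rho^2) W$ and $\Phi_1 = -D_1(\rho^2) W$. The hard part is the growth control of $W$: using $|\Phi_j(\rho)| e^{-2\pi |\mathrm{Im}\,\rho|} \to 0$ and the fact that $\rho D_2(\rho^2) e^{-2\pi |\mathrm{Im}\,\rho|}$ stays bounded away from zero as $|\mathrm{Im}\,\rho| \to \iy$ (the leading term $\tfrac12 \sin 2\rho\pi$ carries the full exponential growth, while the $L_2$-remainders do not concentrate at the endpoint $t = 2\pi$), one gets that $W$ is bounded on $\{ |\mathrm{Im}\,\rho| \ge 1 \}$; a Phragmen–Lindel\"of argument on the strip $\{ |\mathrm{Im}\,\rho| \le 1 \}$ then yields boundedness on all of $\mathbb C$, so $W \equiv \mathrm{const}$ by Liouville. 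Finally, since $\Phi_2 \in L_2(\mathbb R)$ but $\rho D_2(\rho^2) \notin L_2(\mathbb R)$, this constant is $0$, whence $\Phi_1 \equiv \Phi_2 \equiv 0$, and $f_1 = f_2 = 0$ by injectivity of the cosine and sine transforms.

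I expect the main obstacle to be exactly this last paragraph — splitting the single scalar identity $U \equiv 0$ back into its two components and establishing the uniform estimates that make $W$ entire, of vanishing exponential type, and bounded. The derivation of $U \equiv 0$ itself is a near-verbatim repetition of Lemma~\ref{lem:uniqM}, so the genuinely new content of Lemma~\ref{lem:complete} lies in the injectivity argument.
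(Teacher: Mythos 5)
Your proposal follows the paper's proof essentially step for step: the function $U(\rho)=\rho D_2(\rho^2)\Phi_1(\rho)+D_1(\rho^2)\Phi_2(\rho)$ is exactly $\rho W(\rho^2)$ for the paper's $W(\la)$, the comparison with $P(\la)$ via Corollary~\ref{cor:prodla}, Phragmen--Lindel\"of, Liouville and the Paley--Wiener class $B_{2,4\pi}$ is identical, and your ``injectivity'' step (the entire quotient $\Phi_2/(\rho D_2(\rho^2))$, bounded, hence a constant which must vanish since $\rho D_2(\rho^2)\notin L_2(\mathbb R)$) is the paper's argument with $H(\la)/D_2(\la)$, relying on the same fact that $D_1$ does not vanish at the zeros of $D_2$. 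The proposal is correct and takes essentially the same route.
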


\begin{proof}
Suppose that functions $w_1, w_2 \in L_2(0, 2 \pi)$ are such that
\begin{equation} \label{smeqw}
\int_0^{2 \pi} \left(w_1(t) \frac{\rho_{nk}}{g_{nk}} \cos \rho_{nk} t + w_2(t) \sin \rho_{nk} t \right) \, dt = 0, \quad
n \in \mathbb N, \quad k = \overline{1, 4}.
\end{equation}
Recall that $g_{nk} = M_1(\la_{nk}) + M_{p + 1}(\la_{nk})$ and $M_1(\la) + M_{p + 1}(\la) = \dfrac{D_1(\la)}{D_2(\la)}$. In view of the assumptions ($A_3$) and ($A_4$), $\rho_{nk} \ne 0$ and $g_{nk} \ne 0$ for $n \in \mathbb N$, $k = \overline{1, 4}$. The assumption ($A_1$) together with \eqref{defD} implies $D_2(\la_{nk}) \ne 0$.
Consequently, the entire function
\begin{equation} \label{defW}
W(\la) := \int_0^{2 \pi} \left( w_1(t) D_2(\la) \cos \rho t + w_2(t) D_1(\la) \frac{\sin \rho t}{\rho} \right) \, dt
\end{equation}
has zeros at the points $\{ \la_{nk} \}_{n \in \mathbb N, \, k = \overline{1, 4}}$. It follows from \eqref{asymptD} and \eqref{defW}, that 
\begin{equation} \label{estW}
W(\la) = O(|\rho|^{-1}\exp(4 |\mbox{Im}\,\rho|\pi)), \quad |\rho| \ge \rho^* > 0.
\end{equation}

Construct the function
\begin{equation*} 
P(\la) := \prod_{k = 1}^4 \prod_{n = 1}^{\iy} \left( 1 - \frac{\la}{\la_{nk}} \right).
\end{equation*}
Note that $\la_{nk} \ne 0$ due to ($A_3$). Clearly, the function $\dfrac{W(\la)}{P(\la)}$ is entire.
The estimate \eqref{estW} and Corollary~\ref{cor:prodla} from Appendix~B yield
$\dfrac{W(\la)}{P(\la)} = O(1)$ for $\la = \rho^2$, $\eps < \arg \rho < \pi - \eps$. Applying Phragmen-Lindel\"of's and Liouville's theorems \cite{BFY}, we conclude that $W(\la) \equiv C P(\la)$. Using \eqref{defW}, one can easily show that $\rho W(\rho^2) \in B_{2, 4\pi}$ (as a function of $\rho$). However, the expression \eqref{reprP} implies $\rho P(\rho^2) \not \in B_{2, 4 \pi}$. Hence $C = 0$ and $W(\la) \equiv 0$.

Let $\{ \tau_n \}_{n \in \mathbb N}$ be the sequence of zeros of the function $D_2(\la)$. Using \eqref{defD}, one can easily check, that $D_1(\tau_n) \ne 0$. Consequently, the function
\begin{equation} \label{defH}
H(\la) := \int_0^{2 \pi} w_2(t) \frac{\sin \rho t}{\rho} \, dt
\end{equation}
has zeros at the points $\{ \tau_n \}_{n \in \mathbb Z}$ (if $\tau_n$ is a multiple zero of $D_2(\la)$, then it is also a multiple zero of $H(\la)$ with the same multiplicity). 
Thus, the function $\dfrac{H(\la)}{D_2(\la)}$ is entire. 

It follows from \eqref{defH} and \eqref{asymptD}, that $\dfrac{H(\la)}{D_2(\la)} = O(1)$ as $|\la| \to \iy$. By Liouville's theorem, $H(\la) \equiv C D_2(\la)$. However, $\rho H(\rho^2) \in B_{2, 2\pi}$ and $\rho D_2(\rho^2) \not \in B_{2, 2 \pi}$. Therefore $H(\la) \equiv 0$ and, consequently, $w_2 = 0$ in $L_2(0, \pi)$. 
Using \eqref{defW} and the relation $W(\la) \equiv 0$, we conclude that $w_1 = 0$. In view of \eqref{smeqw}, the system $\{ v_{nk} \}_{n \in \mathbb N, \, k = \overline{1, 4}}$ is complete in $\mathcal H$.
\end{proof}

\begin{proof}[Proof of Theorem~\ref{thm:Riesz}]
The assertion of the theorem immediately follows from Lemmas~\ref{lem:estv},~\ref{lem:Rieszv0} and~\ref{lem:complete}. 
\end{proof}

\bigskip

{\large \bf Appendix B. Entire functions}

\bigskip

Here we discuss entire functions, constructed as infinite products by their zeros with a certain asymptotic behavior. We derive some relations, which can be used for evaluation of these functions. Our analysis is based on the following result.

\begin{lem}[\cite{BB16}] \label{lem:prod}
Let 
$$
\rho_n = n + \varkappa_n, \quad n \in \mathbb Z, 
$$
be arbitrary complex numbers, and 
$$
P(\rho) := \pi (\rho - \rho_0) \prod_{\substack{n = -\iy \\n \ne 0}}^{\iy} \frac{\rho_n - \rho}{n} \exp\left(\frac{\rho}{n}\right).
$$
Then $P(\rho)$ can be represented in the form
$$
P(\rho) = \sin \rho \pi + \int_0^{\pi} ( w_1(t) \sin \rho t + w_2(t) \cos \rho t) \, dt, 
$$
where $w_1, w_2 \in L_2(0, \pi)$.
\end{lem}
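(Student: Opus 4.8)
The plan is to treat $P$ as a perturbation of $\sin \rho \pi$. First I would record the base case: when all $\varkappa_n = 0$, so that $\rho_n = n$, pairing the factors with indices $n$ and $-n$ collapses the exponentials and yields $\pi \rho \prod_{n=1}^{\iy}(1 - \rho^2/n^2) = \sin \rho \pi$, the classical Euler product. Hence it suffices to show that the difference $G(\rho) := P(\rho) - \sin \rho \pi$ belongs to the Paley--Wiener class $B_{2, \pi}$: once this is established, the Paley--Wiener theorem gives $G(\rho) = \int_{-\pi}^{\pi} g(t) e^{i \rho t}\, dt$ for some $g \in L_2(-\pi, \pi)$, and splitting $g$ into its even and odd parts rewrites this as $\int_0^\pi (w_1(t) \sin \rho t + w_2(t) \cos \rho t)\, dt$ with $w_1, w_2 \in L_2(0, \pi)$. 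Adding back $\sin \rho \pi$ then produces exactly the claimed representation.

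To analyze $G$, I would exploit the same pairing of indices in the ratio, obtaining
$$
\frac{P(\rho)}{\sin \rho \pi} = \prod_{n = -\iy}^{\iy} \left( 1 + \frac{\varkappa_n}{n - \rho} \right),
$$
where the $n = 0$ factor is read as $1 - \varkappa_0/\rho$. Since $\{ \varkappa_n \} \in l_2$, the Cauchy--Schwarz inequality shows $\sum_n |\varkappa_n|/|n - \rho|$ converges for every non-integer $\rho$, so the product converges. The decisive contribution is the first-order part of $G$: using the identity $\dfrac{\sin \rho \pi}{n - \rho} = -(-1)^n \dfrac{\sin \pi (\rho - n)}{\rho - n}$, one has
$$
\sin \rho \pi \sum_{n = -\iy}^{\iy} \frac{\varkappa_n}{n - \rho} = -\pi \sum_{n = -\iy}^{\iy} (-1)^n \varkappa_n \, \frac{\sin \pi (\rho - n)}{\pi (\rho - n)}.
$$
The functions $\{ \sin \pi(\rho - n)/(\pi(\rho - n)) \}_{n \in \mathbb{Z}}$ form an orthonormal basis of $B_{2, \pi}$ (the cardinal series), so this sum lies in $B_{2, \pi}$ with norm $\pi \| \{ \varkappa_n \} \|_{l_2}$. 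This is precisely where the $l_2$-hypothesis enters, and it simultaneously explains why the representation takes the stated $\sin$/$\cos$ form.

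It remains to control the exponential type and the higher-order part of the product. For the type, I would show $\log |P(\rho)| \le \pi |\mbox{Im}\, \rho| + o(|\rho|)$ from the genus-one structure of the product and the density-one distribution of its zeros $\rho_n = n + \varkappa_n$; since $\sin \rho \pi$ has type $\pi$, this gives $G$ of exponential type at most $\pi$. For the $L_2(\mathbb{R})$ bound on the nonlinear part, I would argue on each interval $[m - \tfrac12, m + \tfrac12]$ separately, isolating the single factor $1 + \varkappa_m/(m - \rho)$ whose pole at $\rho = m$ is cancelled exactly by the zero of $\sin \rho \pi$, while the remaining product $\prod_{n \ne m}(1 + \varkappa_n/(n - \rho))$ stays uniformly bounded there because $|n - \rho| \ge \tfrac12$ for $n \ne m$. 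A careful expansion then bounds the contribution of the quadratic and higher terms on this interval by $C \varkappa_m^2$, and summing over $m$ yields a finite total $L_2$-norm.

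The hard part will be this last step: the uniform estimate of the nonlinear part of the product near the real zeros of $\sin \rho \pi$, where the individual factors $1/(n - \rho)$ are singular. The linear term is clean thanks to the cardinal-series orthogonality, but showing that, once the single nearby singular factor is removed, the remainder contributes no more than an $l_2$-amount requires the delicate bookkeeping sketched above, together with the fact that $\sum_{n \ne m} 1/|n - x|^2$ remains bounded uniformly in $x$ after deletion of the nearest integer.
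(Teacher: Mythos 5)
The paper does not actually prove this lemma: it is imported verbatim from \cite{BB16}, so there is no internal proof to compare against. Your architecture — reduce to showing $P(\rho)-\sin\rho\pi\in B_{2,\pi}$, verify the exponential type from the product structure, handle the first-order term via the orthonormal cardinal series $\{\sin\pi(\rho-n)/(\pi(\rho-n))\}_{n\in\mathbb Z}$, and then apply Paley--Wiener and an even/odd splitting — is the standard and correct route for results of this type, and the ratio formula $P(\rho)/\sin\rho\pi=\prod_n\bigl(1+\varkappa_n/(n-\rho)\bigr)$ together with the identity for the linear term is exactly right.

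The gap is in your treatment of the higher-order part. Write $u_n=\varkappa_n/(n-\rho)$ and, on $[m-\tfrac12,m+\tfrac12]$, factor out $1+u_m$ as you propose, so that
$$
\prod_n(1+u_n)-1-\sum_n u_n \;=\; u_m S_m+u_m R_m+R_m,\qquad S_m=\sum_{n\ne m}u_n,\quad R_m=\prod_{n\ne m}(1+u_n)-1-S_m .
$$
The two terms carrying the factor $u_m$ are indeed $O(|\varkappa_m|)$ after multiplication by $\sin\rho\pi$, hence square-summable over $m$. But the pure remainder $\sin\rho\pi\,R_m$ does not involve the index $m$ at all, so it cannot be bounded by $C\varkappa_m^2$; its leading piece is $\sum_{n<k,\;n,k\ne m}\varkappa_n\varkappa_k/((n-\rho)(k-\rho))$, which near $\rho=m$ is of size comparable to $\bigl(\sum_{n\ne m}|\varkappa_n|/|n-m|\bigr)^2$, a quantity built from \emph{all} the $\varkappa_n$. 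The crude bound $|R_m|\le C\bigl(\sum_{n\ne m}|u_n|\bigr)^2\le C\|\varkappa\|_{l_2}^2$ (via Cauchy--Schwarz and $\sum_{n\ne m}|n-\rho|^{-2}\le C$) is uniformly bounded but not summable over $m$, so your sketch as stated does not close.

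The missing ingredient is a summability mechanism of Hilbert/Young type. For instance, set $T(\rho)=\sum_{n}|\varkappa_n|\min(2,|n-\rho|^{-1})$; since the kernel $\min(2,|x|^{-1})$ lies in $L^{4/3}(\mathbb R)$, Young's convolution inequality gives $\|T\|_{L^4(\mathbb R)}\le C\|\varkappa\|_{l_2}$, hence $T^2\in L^2(\mathbb R)$ and $|R_m|\le C\,T^2$ is square-integrable after summation over $m$. Alternatively, the second elementary symmetric function can be estimated directly by the Hilbert-type bilinear form $\sum_{n\ne k}|\varkappa_n||\varkappa_k|/|n-k|$ combined with partial fractions $\frac{1}{(n-\rho)(k-\rho)}=\frac{1}{k-n}\bigl(\frac{1}{n-\rho}-\frac{1}{k-\rho}\bigr)$, which returns you to the cardinal series. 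Either way, the per-interval bound you need is not $C\varkappa_m^2$ but $C(\sigma_m^2+\tau_m)$ with $\sigma_m=\sum_{n\ne m}|\varkappa_n|/|n-m|\in l_4$ and $\tau_m=\sum_{n\ne m}\varkappa_n^2/(n-m)^2\in l_2$. With this replacement the proof goes through; without it, the decisive step fails.
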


Lemma~\ref{lem:prod} has the following corollaries. We prove only Corollary~\ref{cor:shift}, since the proofs of Corollaries~\ref{cor:sin} and~\ref{cor:cos} exploit similar ideas.

\begin{cor} \label{cor:sin}
The function
$$
P(\la) := \prod_{n = 1}^{\iy} \left( 1 - \frac{\la}{\la_n}\right),
$$
where 
$$
\la_n = \rho_n^2 \ne 0, \quad \rho_n = n + \varkappa_n, \quad n \in \mathbb N,
$$
admits the representation
$$
P(\la) = \frac{C \sin \rho \pi}{\rho} + \frac{1}{\rho} \int_0^{\pi} w(t) \sin \rho t \, dt, \quad w \in L_2(0, \pi).
$$
\end{cor}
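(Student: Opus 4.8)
The plan is to reduce the one-sided product over the squares $\la_n=\rho_n^2$ to the two-sided canonical product handled by Lemma~\ref{lem:prod}, and then read off the representation as a function of $\rho$.

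First I would rewrite $P$ as a function of $\rho$. Since $\la_n=\rho_n^2\ne 0$ forces $\rho_n\ne 0$, and $\la=\rho^2$, each factor becomes
$$
1-\frac{\la}{\la_n}=\frac{\rho_n^2-\rho^2}{\rho_n^2}=\frac{(\rho_n-\rho)(\rho_n+\rho)}{\rho_n^2}.
$$
Thus, viewed in the $\rho$-plane, the zeros of $P(\rho^2)$ form the symmetric set $\{\pm\rho_n\}_{n\ge 1}$, while $\rho P(\rho^2)$ in addition vanishes at $\rho=0$. This suggests extending the sequence to $\mathbb Z$ by $\tilde\rho_n:=\rho_n$ for $n\ge 1$, $\tilde\rho_{-n}:=-\rho_n$ for $n\ge 1$, and $\tilde\rho_0:=0$. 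One checks that $\tilde\rho_n=n+\tilde\varkappa_n$ with $\{\tilde\varkappa_n\}_{n\in\mathbb Z}\in l_2$ (for $n\ge1$ one has $\tilde\varkappa_n=\varkappa_n$, for $n\le-1$ one has $\tilde\varkappa_n=-\varkappa_{|n|}$, and $\tilde\varkappa_0=0$), so Lemma~\ref{lem:prod} applies to $\{\tilde\rho_n\}$.

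Next I would apply Lemma~\ref{lem:prod} to this extended sequence to obtain an entire function
$$
\hat P(\rho):=\pi\rho\prod_{\substack{n=-\iy\\ n\ne 0}}^{\iy}\frac{\tilde\rho_n-\rho}{n}\exp\!\left(\frac{\rho}{n}\right)=\sin\rho\pi+\int_0^{\pi}\bigl(w_1(t)\sin\rho t+w_2(t)\cos\rho t\bigr)\,dt,
$$
with $w_1,w_2\in L_2(0,\pi)$. The key computation is to pair the $n$-th and $(-n)$-th factors: the convergence factors $\exp(\pm\rho/n)$ cancel and the pair collapses to $(\rho_n^2-\rho^2)/n^2$, giving
$$
\hat P(\rho)=\pi\rho\prod_{n=1}^{\iy}\frac{\rho_n^2-\rho^2}{n^2}=\pi\,C_0\,\rho\,P(\rho^2),\qquad C_0:=\prod_{n=1}^{\iy}\frac{\rho_n^2}{n^2}.
$$
Here I must verify that $C_0$ is a finite nonzero constant: writing $\rho_n^2/n^2=1+\varkappa_n(2n+\varkappa_n)/n^2$ and using $\{\varkappa_n\}\in l_2$ together with Cauchy--Schwarz shows that the series of logarithms converges absolutely, so the regrouping is legitimate and $C_0\ne0$.

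Finally I would use parity to remove the cosine term. Since $P(\rho^2)$ is even in $\rho$, the function $\rho P(\rho^2)$, and hence $\hat P(\rho)=\pi C_0\,\rho P(\rho^2)$, is odd; comparing with the representation of $\hat P$, the even part $\int_0^{\pi}w_2(t)\cos\rho t\,dt$ must vanish identically, whence $w_2=0$. Dividing by $\pi C_0\,\rho$ then yields
$$
P(\la)=\frac{1}{\pi C_0}\cdot\frac{\sin\rho\pi}{\rho}+\frac{1}{\rho}\int_0^{\pi}\frac{w_1(t)}{\pi C_0}\sin\rho t\,dt,
$$
which is exactly the claimed form with $C=(\pi C_0)^{-1}$ and $w=w_1/(\pi C_0)\in L_2(0,\pi)$. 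I expect the main obstacle to be the bookkeeping in the pairing of factors and the justification of the resulting absolutely convergent product defining $C_0$; once the regrouping is rigorously justified, the parity argument and the final identification of constants are routine.
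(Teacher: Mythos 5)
Your proof is correct and follows essentially the same route as the paper: the paper omits the proof of this corollary, stating that it uses the same ideas as its proof of Corollary~\ref{cor:shift}, namely a symmetric extension of $\{\rho_n\}$ to $\mathbb Z$, reduction to the canonical product of Lemma~\ref{lem:prod} with an absolutely convergent constant of proportionality, and a parity argument to kill the cosine term. Your pairing of the $n$-th and $(-n)$-th factors is just a cleaner bookkeeping of the ratio computation the paper carries out in \eqref{fracd} (which simplifies here since there is no shift $a$), and your justification of the convergence of $C_0$ via Cauchy--Schwarz is the right one.
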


\begin{cor} \label{cor:cos}
The function
$$
P(\la) := \prod_{n = 1}^{\iy} \left( 1 - \frac{\la}{\la_n}\right),
$$
where 
$$
\la_n = \rho_n^2 \ne 0, \quad \rho_n = n - \frac{1}{2} + \varkappa_n, \quad n \in \mathbb N,
$$
admits the representation
$$
P(\la) = C \cos \rho \pi + \int_0^{\pi} w(t) \cos \rho t \, dt, \quad w \in L_2(0, \pi).
$$
\end{cor}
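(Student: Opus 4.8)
To prove Corollary~\ref{cor:cos}, the plan is to reduce it to Lemma~\ref{lem:prod} by a change of variable that converts the half-integer asymptotics of the zeros into the integer asymptotics required by the lemma, and then to use parity to remove the sine part. Since $\la_n = \rho_n^2$, the function $P(\rho^2)$, regarded as an entire function of $\rho$, is even and has the zeros $\{\pm\rho_n\}_{n\in\mathbb N}$, where $\rho_n = n-\tfrac12+\varkappa_n$. First I would introduce the shifted variable $s = \rho + \tfrac12$, for which $\cos\rho\pi = \sin s\pi$. A direct computation shows that in the variable $s$ the zero set becomes $\{\,n+\varkappa_n\,\}_{n\ge 1}\cup\{\,1-n-\varkappa_n\,\}_{n\ge 1}$, which can be reindexed as a two-sided sequence $\{s_m\}_{m\in\mathbb Z}$ of the form $s_m = m+\varkappa_m$ with a new $l_2$-sequence $\{\varkappa_m\}_{m\in\mathbb Z}$: the positive zeros supply $s_m\approx m$ for $m\ge 1$, and the negative ones supply $s_m\approx m$ for $m\le 0$.

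Next I would form the canonical product $P_L(s)$ of Lemma~\ref{lem:prod} associated with $\{s_m\}$, so that the lemma yields $P_L(s) = \sin s\pi + \int_0^\pi (w_1(t)\sin st + w_2(t)\cos st)\,dt$ with $w_1,w_2\in L_2(0,\pi)$. Both $P((s-\tfrac12)^2)$ and $P_L(s)$ are entire of order one in $s$ and have exactly the same zeros; here one uses that the $\pm$-pairing of the zeros of $P(\rho^2)$ makes the genus-one exponential factors $e^{\pm\rho/\rho_n}$ cancel within each pair, reconciling the absolutely convergent product $\prod(1-\la/\la_n)$ with the genus-one product of the lemma. Hence their ratio equals $\exp(a+bs)$ for constants $a,b$. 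Since both functions are of sine type (exponential type $\pi$ along the imaginary axis and bounded along the real axis), the indicator of $e^{bs}$ must vanish, which forces $b=0$. Thus $P((s-\tfrac12)^2) = C\,P_L(s)$ for a constant $C$, which can be fixed by evaluating at $\rho=0$ (i.e. $s=\tfrac12$), where $P(0)=1$.

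It then remains to undo the substitution and simplify. Writing $s=\rho+\tfrac12$ and expanding $\sin st$ and $\cos st$ by the angle-addition formulas (whose coefficients $\cos\tfrac t2,\sin\tfrac t2$ are bounded, so products with $w_1,w_2$ stay in $L_2$) produces
\[
P(\rho^2) = C\cos\rho\pi + \int_0^\pi\bigl(\hat w_2(t)\cos\rho t + \hat w_1(t)\sin\rho t\bigr)\,dt,
\]
with $\hat w_1,\hat w_2\in L_2(0,\pi)$. Finally, since the left-hand side is even in $\rho$ while $\int_0^\pi \hat w_1(t)\sin\rho t\,dt$ is odd, the odd part must vanish identically, giving $\hat w_1=0$ and the desired representation with $w=\hat w_2$.

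I expect the main obstacle to be the rigorous passage from ``the two entire functions have the same zeros'' to ``they coincide up to the constant $C$''. One must confirm that $P(\rho^2)$ really is of genus one with no spurious exponential factor (this is exactly where the cancellation of the $e^{\pm\rho/\rho_n}$ factors across the $\pm\rho_n$ pairs is essential), and then eliminate the factor $e^{bs}$ by a genuinely two-directional growth estimate rather than along a single ray. The index matching in the reindexing of the two-sided zero sequence is routine but must be carried out carefully so as to keep $\{\varkappa_m\}\in l_2$.
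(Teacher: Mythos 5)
Your proposal is correct and follows essentially the same route as the paper, which omits an explicit proof of Corollary~\ref{cor:cos} and refers instead to the proof of Corollary~\ref{cor:shift}: there too the zeros are symmetrized into a two-sided sequence with integer asymptotics after a shift by the offset, Lemma~\ref{lem:prod} is applied, and the two products are identified up to a constant. The only real variation is that you eliminate the spurious factor $e^{bs}$ by an indicator/sine-type argument, whereas the paper computes the ratio of the two canonical products explicitly and cancels the linear exponent using the $\pm$-symmetry of the zeros; both devices are standard and adequate here.
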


\begin{cor} \label{cor:shift}
The function
\begin{equation} \label{prodP}
P(\la) := \prod_{n = 0}^{\iy} \left(1 - \frac{\la}{\la_n^+} \right) \prod_{n = 1}^{\iy} \left(1 - \frac{\la}{\la_n^-} \right),
\end{equation}
where
\begin{align*}
\la_n^+ & = (\rho_n^+)^2 \ne 0, \quad \rho_n^+ = n + a + \varkappa_n, \quad n \in \mathbb N_0, \\
\la_n^- & = (\rho_n^-)^2 \ne 0, \quad \rho_n^- = n - a + \varkappa_n, \quad n \in \mathbb N, \\
\end{align*}
admits the representation
$$
P(\la) = C (\cos 2 \rho \pi - \cos 2 a \pi) + \int_0^{2 \pi} w(t) \cos \rho t\, dt, \quad w \in L_2(0, 2 \pi).
$$
\end{cor}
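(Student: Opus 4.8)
The plan is to pass to the variable $\rho$, split the zeros of $P$ into two $\mathbb Z$-indexed families to which the model Lemma~\ref{lem:prod} applies after an affine shift, and then multiply the two resulting $\sin$-type representations.

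First I would realize $P$ as an even entire function of $\rho$. Since $\la = \rho^2$ and $\la_n^{\pm} = (\rho_n^{\pm})^2$, each factor splits as
\[
1 - \frac{\la}{\la_n^{+}} = \frac{(\rho_n^{+} - \rho)(\rho_n^{+} + \rho)}{(\rho_n^{+})^2}, \qquad
1 - \frac{\la}{\la_n^{-}} = \frac{(\rho_n^{-} - \rho)(\rho_n^{-} + \rho)}{(\rho_n^{-})^2},
\]
so, as a function of $\rho$, $P(\rho^2)$ has the zero set $\{ \pm \rho_n^{+} \}_{n \in \mathbb N_0} \cup \{ \pm \rho_n^{-} \}_{n \in \mathbb N}$; the asymptotics $\rho_n^{\pm} \sim n$ guarantee convergence and order $1$ in $\rho$. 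The key observation is that this zero set regroups into two families
\[
\mu_k^{+} := k + a + \varkappa_k \ \ (k \in \mathbb Z), \qquad \mu_k^{-} := k - a + \varkappa_k \ \ (k \in \mathbb Z),
\]
obtained by pairing $\{ \rho_n^{+} \}_{n \ge 0}$ with $\{ -\rho_n^{-} \}_{n \ge 1}$ for the ``$+a$'' family and $\{ \rho_n^{-} \}_{n \ge 1}$ with $\{ -\rho_n^{+} \}_{n \ge 0}$ for the ``$-a$'' family, the resulting $\{ \varkappa_k \} \in l_2$ being built from the original $l_2$-sequences. One checks directly the symmetry $-\{ \mu_k^{+} \}_{k\in\mathbb Z} = \{ \mu_k^{-} \}_{k\in\mathbb Z}$ (via $k \mapsto -k$), consistent with $P(\rho^2)$ being even.

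Second, to each family I would apply Lemma~\ref{lem:prod}. Let $Q_{\pm}(\rho)$ be the canonical genus-one products of the form of Lemma~\ref{lem:prod} built over $\{ \mu_k^{\pm} \}_{k \in \mathbb Z}$. Comparing zeros and orders through the Hadamard factorization, and using that $P(\rho^2)$ is even, one obtains $P(\rho^2) = C_0 \, Q_{+}(\rho) Q_{-}(\rho)$ with a real constant $C_0$. Applying Lemma~\ref{lem:prod} in the shifted argument $\rho \mp a$ and expanding $\sin((\rho \mp a)t)$ and $\cos((\rho \mp a)t)$ in $\sin \rho t$, $\cos \rho t$, I would get
\[
Q_{\pm}(\rho) = \sin \pi(\rho \mp a) + \int_0^{\pi} \big( w_1^{\pm}(t) \sin \rho t + w_2^{\pm}(t) \cos \rho t \big) \, dt, \qquad w_j^{\pm} \in L_2(0, \pi),
\]
so each $Q_{\pm}$ equals $\sin \pi(\rho \mp a)$ plus a function of the Paley–Wiener class $B_{2, \pi}$. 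Multiplying, the leading term is
\[
\sin \pi(\rho - a) \sin \pi(\rho + a) = \tfrac12 \big( \cos 2 a \pi - \cos 2 \rho \pi \big),
\]
which produces the required $C(\cos 2 \rho \pi - \cos 2 a \pi)$ with $C = -C_0/2 \in \mathbb R$. Every remaining term contains at least one factor in $B_{2, \pi}$; such a factor is bounded on $\mathbb R$ (being the Fourier transform of an $L_2(0,\pi)$ function) while its partner lies in $L_2(\mathbb R)$, so each remaining term belongs to $B_{2, 2\pi}$. Hence $P(\rho^2) - C(\cos 2 \rho \pi - \cos 2 a \pi) \in B_{2, 2\pi}$, and it is even; an even function of $B_{2, 2\pi}$ is the cosine transform $\int_0^{2\pi} w(t) \cos \rho t \, dt$ of some $w \in L_2(0, 2\pi)$, giving the stated representation.

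The main obstacle is the bookkeeping in the regrouping step: one must verify that the two genus-one products $Q_{\pm}$ recombine into the genus-zero product $P(\rho^2)$ up to a single real constant, i.e. that the convergence factors $e^{\rho/k}$ cancel by virtue of the symmetry $-\{\mu_k^{+}\} = \{\mu_k^{-}\}$ and that no spurious exponential $e^{c \rho}$ survives in the Hadamard ratio. Once $P(\rho^2) = C_0 Q_{+}(\rho) Q_{-}(\rho)$ is established, the rest is the routine Paley–Wiener manipulation above.
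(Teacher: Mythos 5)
Your proposal is correct and follows essentially the same route as the paper: the same regrouping of $\{\pm\rho_n^{+}\}\cup\{\pm\rho_n^{-}\}$ into two $\mathbb Z$-indexed families $k\pm a+\varkappa_k$, the same application of Lemma~\ref{lem:prod} in the shifted argument, and the same final Paley--Wiener multiplication. The only cosmetic difference is that the paper disposes of your flagged ``main obstacle'' by an explicit computation of the ratio $d^{\pm}/\tilde d^{\pm}$ (showing the exponential factors cancel in the product) rather than by invoking Hadamard factorization and evenness, but both arguments are sound.
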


\begin{proof}
Denote $\rho_{-n}^+ = -\rho_n^-$ for $n \in \mathbb N$ and $\rho_{-n}^- = - \rho_n^+$ for $n \in \mathbb N_0$. Then 
\begin{equation} \label{smasympt}
\rho_n^{\pm} = n \pm a + \varkappa_n, \quad n \in \mathbb Z,
\end{equation}
and the product \eqref{prodP} can be rewritten in the form
$$
P(\la) = d^+(\rho) d^-(\rho), \quad d^{\pm}(\rho) := \left(1 - \frac{\rho}{\rho_0^{\pm}} \right)\prod_{\substack{n = -\iy \\ n \ne 0 }}^{\iy}
\left( 1 - \frac{\rho}{\rho_n^{\pm}} \right) \exp\left( \frac{\rho}{\rho_n^{\pm}}\right)
$$ 
Clearly, $d^+(\rho)$ and $d^-(\rho)$ are entire functions with the zeros $\{ \rho_n^+ \}_{n \in \mathbb Z}$ and $\{ \rho_n^- \}_{n \in \mathbb Z}$, respectively. Introduce the functions
$$
\tilde d^{\pm}(\rho) := \pi (\rho_0^{\pm} - \rho) \prod_{\substack{n = -\iy \\ n \ne 0}}^{\iy} \frac{\rho_n^{\pm} - \rho}{n} \exp\left( \frac{\rho \mp a}{\rho_n^{\pm} \mp a}\right), 
$$
having the same zeros. For simplicity, we assume that $\rho_n^{\pm} \ne \pm a$.
One can easily calculate
\begin{equation} \label{fracd}
\frac{d^{\pm}(\rho)}{\tilde d^{\pm}(\rho)} = \frac{1}{\pi \rho_0^{\pm}} \exp\left( \rho \sum_{\substack{n = -\iy \\ n \ne 0}}^{\iy} \frac{\mp a}{\rho_n^{\pm} (\rho_n^{\pm} \mp a)}\right) \prod_{\substack{n = -\iy \\ n \ne 0}}^{\iy} \frac{n}{\rho_n^{\pm}} \exp\left( \frac{\pm a}{\rho_n^{\pm} \mp a}\right).
\end{equation}
By virtue of the asymptotic formula \eqref{smasympt}, the sum and the product in \eqref{fracd} converge absolutely. The relation \eqref{fracd} yields
$$
\frac{d^+(\rho) d^-(\rho)}{\tilde d^+(\rho) \tilde d^-(\rho)} = C.
$$
By Lemma~\ref{lem:prod}
$$
\tilde d^{\pm}(\rho \pm a) = \sin \rho \pi + \int_0^{\pi} w_1^{\pm}(t) \sin \rho t \, dt + \int_0^{\pi} w_2^{\pm}(t) \cos \rho t \, dt.
$$
Consequently,
\begin{multline*}
P(\la) = d^+(\rho) d^-(\rho) = C \tilde d^+(\rho) \tilde d^-(\rho) = C \biggl( \sin (\rho - a) \pi + \int_0^{\pi} w_1^+(t) \sin (\rho - a) t \, dt \\ +  \int_0^{\pi} w_2^+(t) \cos(\rho - a) t\, dt \biggr) 
\biggl( \sin (\rho + a) \pi + \int_0^{\pi} w_1^-(t) \sin (\rho + a) t \, dt  +  \int_0^{\pi} w_2^-(t) \cos(\rho + a) t\, dt \biggr) \\
= 2 C (\cos 2 a \pi - \cos 2 \rho \pi) + F(\rho).
\end{multline*}
Clearly, $F \in B_{2, 2 \pi}$ and $F(\rho) = F(-\rho)$. Hence $F(\rho) = \displaystyle\int_0^{2 \pi} w(t) \cos \rho t \, dt$, where $w \in L_2(0, 2 \pi)$.
\end{proof}

Recall that $\{ \la_{nk} \}_{n \in \mathbb N, \, k = \overline{1, 4}}$ are eigenvalues of the boundary value problem $L$, satisfying the asymptotic relations \eqref{asymptrho}. For simplicity, assume that $\la_{nk} \ne 0$.
Summarizing the results of the previous corollaries, we obtain the following one.

\begin{cor} \label{cor:prodla}
The function
$$
P(\la) := \prod_{k = 1}^4 \prod_{n = 1}^{\iy} \left( 1 - \frac{\la}{\la_{nk}} \right)
$$
admits the representation
\begin{equation} \label{reprP}
P(\la) = \frac{C}{\rho} \sin \rho \pi \cos \rho \pi (\cos 2 \rho \pi - \cos 2 \al) + \frac{1}{\rho} \int_0^{4 \pi} w(t) \sin \rho t \, dt, \quad w \in L_2(0, 4 \pi).
\end{equation}
The following estimate from below is valid
$$
|P(\rho^2)| \ge C |\rho|^{-1} \exp(4 |\mbox{Im}\, \rho| \pi), \quad \eps < \arg \rho < \pi - \eps, \quad
|\rho| \ge \rho^*,
$$
for some positive $\eps$ and $\rho^*$.
\end{cor}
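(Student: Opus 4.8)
The plan is to factor $P$ into the four families $k=1,2,3,4$ matching the four asymptotic regimes in \eqref{asymptrho}, apply to each the corresponding corollary proved above, and multiply the resulting representations.

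First I would group $k=1$ and $k=2$. By \eqref{asymptrho}, reindexing $n\mapsto n-1$ in the first family shows that $\{\la_{n1}\}$ and $\{\la_{n2}\}$ are exactly the zeros required in Corollary~\ref{cor:shift} with $a=\al/\pi$, so that $\cos 2a\pi=\cos 2\al$. Hence
$$
\prod_{n=1}^\iy\left(1-\frac{\la}{\la_{n1}}\right)\prod_{n=1}^\iy\left(1-\frac{\la}{\la_{n2}}\right) = C_{12}(\cos 2\rho\pi-\cos 2\al) + a(\rho),
$$
where $a\in B_{2,2\pi}$ is even. The family $k=3$ (zeros $n-\frac12+\varkappa_n$) matches Corollary~\ref{cor:cos}, giving $C_3\cos\rho\pi+b(\rho)$ with $b\in B_{2,\pi}$ even; the family $k=4$ (zeros $n+\varkappa_n$) matches Corollary~\ref{cor:sin}, giving $\rho^{-1}(C_4\sin\rho\pi+c(\rho))$ with $c\in B_{2,\pi}$ odd. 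Multiplying these three expressions reproduces $P(\la)$, and the product of the three leading terms is precisely $\frac{C}{\rho}\sin\rho\pi\cos\rho\pi(\cos 2\rho\pi-\cos 2\al)$ with $C=C_{12}C_3C_4\ne 0$.

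It remains to absorb every remaining cross term into $\frac{1}{\rho}\int_0^{4\pi}w\sin\rho t\,dt$. Here I would use that a function in $B_{2,\sigma}$ is bounded on $\mathbb R$ (its spectral density lies in $L_2\subset L_1$ of a finite interval, so the transform is continuous and bounded). Consequently, products of the bounded trigonometric factors $\cos 2\rho\pi$, $\cos\rho\pi$, $\sin\rho\pi$ with any of $a,b,c$, as well as products of two of $a,b,c$, remain in $L_2(\mathbb R)$ and of exponential type $\le 4\pi$, hence each cross term lies in $B_{2,4\pi}$. Since $\rho P(\rho^2)$ is odd (even times even times odd in $\rho$) with odd leading term, the sum of the cross terms is an odd element of $B_{2,4\pi}$, thus representable as $\int_0^{4\pi}w(t)\sin\rho t\,dt$ with $w\in L_2(0,4\pi)$. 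Dividing by $\rho$ yields \eqref{reprP}.

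Finally, for the lower estimate I would restrict to the sector $\eps<\arg\rho<\pi-\eps$, where $|\mbox{Im}\,\rho|\ge|\rho|\sin\eps\to\iy$. For large $|\mbox{Im}\,\rho|$ each factor obeys $|\sin\rho\pi|,|\cos\rho\pi|\ge C e^{|\mbox{Im}\,\rho|\pi}$ and $|\cos 2\rho\pi-\cos 2\al|\ge C e^{2|\mbox{Im}\,\rho|\pi}$, so the leading term is bounded below by $C|\rho|^{-1}e^{4|\mbox{Im}\,\rho|\pi}$. The main obstacle, and the only genuinely analytic step, is to show that the remainder is of strictly smaller order, namely $\frac{1}{\rho}\int_0^{4\pi}w\sin\rho t\,dt = o(|\rho|^{-1}e^{4|\mbox{Im}\,\rho|\pi})$ as $|\rho|\to\iy$ in the sector; this follows from the Riemann--Lebesgue-type decay of $e^{-4|\mbox{Im}\,\rho|\pi}\int_0^{4\pi}w\sin\rho t\,dt$ for $w\in L_2(0,4\pi)$. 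The triangle inequality then gives the claimed bound for $|\rho|\ge\rho^*$.
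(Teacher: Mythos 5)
Your proposal is correct and is exactly the argument the paper intends: the corollary is stated with no proof beyond ``summarizing the results of the previous corollaries,'' i.e.\ applying Corollary~\ref{cor:shift} (with $a=\al/\pi$) to the families $k=1,2$, Corollary~\ref{cor:cos} to $k=3$, Corollary~\ref{cor:sin} to $k=4$, and multiplying, with the cross terms absorbed into an odd element of $B_{2,4\pi}$ and the lower bound obtained in the sector from the dominance of the leading term. You have merely filled in the routine details (boundedness of Paley--Wiener functions on $\mathbb R$, parity, and the Riemann--Lebesgue-type decay of the remainder) that the paper leaves implicit.
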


\medskip

{\bf Acknowledgment.} This work was supported in part by the Russian Federation President Grant MK-686.2017.1, by Grant
1.1660.2017/PCh of the Russian Ministry of Education and Science and by Grants 15-01-04864, 16-01-00015, 17-51-53180 of the Russian Foundation for Basic Research.

\medskip

\noindent Natalia Pavlovna Bondarenko \\
1. Department of Applied Mathematics, Samara National Research University, \\
34, Moskovskoye Shosse, Samara 443086, Russia, 
2. Department of Mechanics and Mathematics, Saratov State University, \\
Astrakhanskaya 83, Saratov 410012, Russia, \\
e-mail: {\it BondarenkoNP@info.sgu.ru}

\end{document}